\documentclass[12pt,reqno]{amsart}

\usepackage[margin=1in]{geometry}
\usepackage{
amsfonts,
amsmath,amssymb,amsbsy,amstext,amsthm,accents,enumerate,float,
verbatim, mathtools, url, 
bm,
mathbbol, 
cases,
shuffle}
\usepackage[dvipsnames]{xcolor}

\usepackage[shortlabels]{enumitem}
\usepackage[colorlinks=true,hyperindex, linkcolor=blue, pagebackref=false, citecolor=cyan]{hyperref}
\usepackage[alphabetic,lite,backrefs]{amsrefs}
\usepackage{lscape}
\usepackage{cleveref}
\Crefname{@theorem}{Theorem}{Theorems}
\crefname{theorem}{Theorem}{Theorems}

\usepackage{graphicx,pstricks}
\usepackage{graphics}
\usepackage{subcaption}
\usepackage{import}
\graphicspath{ {images/} }
\usepackage{subfiles}
\usepackage{float, pdfpages}
\usepackage[normalem]{ulem}

\usepackage[all,cmtip]{xy}
\usepackage{tikz}
\usetikzlibrary{arrows,decorations.markings, cd}
\tikzstyle arrowstyle=[scale=1]
\tikzstyle directed=[postaction={decorate,
decoration={markings,mark=at position .65 with {\arrow[arrowstyle]{stealth}}}}]

\usepackage{mathdots}
\usepackage{bbold} 
\usepackage{tikz-cd}  
\usepackage[shortlabels]{enumitem}
\usepackage{ytableau}
\usepackage[ruled,vlined]{algorithm2e}
\usepackage[all]{nowidow}
\usepackage{genyoungtabtikz}
\newtheorem{lemma}{Lemma}[section]
\newtheorem{theorem}[lemma]{Theorem}

\newtheorem{prop}[lemma]{Proposition}
\newtheorem{cor}[lemma]{Corollary}

\newtheorem{lem-def}[lemma]{Lemma/Definition}
 \newtheorem*{thmA}{Theorem A}
 \newtheorem*{thmA'}{Theorem A'}
 \newtheorem*{thmB}{Theorem B}
 \newtheorem*{thmC}{Theorem C}
\theoremstyle{definition}

\newtheorem{definition}[lemma]{Definition}
\newtheorem{example}[lemma]{Example}

\newtheorem{notation}[lemma]{Notation} 

\newtheorem{construction}[lemma]{Construction} 

\newtheorem{remark}[lemma]{Remark}

\newtheorem*{conventions}{Conventions}


\newcommand{\im}{\operatorname{im}}

\renewcommand{\hom}{\operatorname{Hom}} 

\newcommand{\Sym}{\operatorname{\textbf{Sym}}} 
\newcommand{\GL}{\mathrm{GL}}

\newcommand{\defi}[1]{\textbf{\textsf{#1}}} 


\newcommand{\kk}{\mathbf k}


\newcommand{\cC}{\mathcal{C}}

\newcommand{\cH}{\mathcal{H}}



\newcommand{\NN}{\mathbb{N}}

\newcommand{\QQ}{\mathbb{Q}}



\newcommand{\ZZ}{\mathbb{Z}}



\DeclarePairedDelimiter\abs{\lvert}{\rvert}%

\def\yshort{\ytableaushort}

\newcommand{\set}[1]{\left\{#1\right\}}

\newcommand{\ol}[1]{\overline{#1}}
\newcommand{\bfP}{\mathbf{P}}
\newcommand{\bfC}{\mathbf{C}}

\newcommand{\NSym}{\operatorname{\mathbf{NSym}}} 
\newcommand{\QSym}{\operatorname{\mathbf{QSym}}} 

\newcommand{\ydiagset}{\ytableausetup{mathmode, aligntableaux=center,boxsize=0.5em}}

\newcommand{\multideg}[1]{\mathrm{multideg}}
\newcommand{\Len}{\ell}

\newcommand{\word}{\mathrm{word}}

\newcommand{\diag}{\mathrm{Diag}}
\newcommand{\des}{\mathrm{Des}}
\newcommand{\Comp}{\mathrm{Comp}}
\newcommand{\symm}{\mathfrak{S}}

\newcommand{\weakComp}{\mathrm{weakComp}}

\newcommand{\rowDelete}{\mathrm{rowDelete}}

\newcommand{\sgn}{\mathrm{sgn}}
\newcommand{\cone}{\mathrm{cone}}

\newcommand{\SRT}{\mathrm{SRT}}

\newcommand{\chQ}{\operatorname{ch}^{\QSym}}

\newcommand{\chN}{\operatorname{ch}^{\NSym}}

\newcommand{\rad}{\operatorname{rad}}
\newcommand{\Top}{\operatorname{top}}

\newcommand{\rev}{\operatorname{rev}}

\newcommand{\components}{\operatorname{components}}

\newcommand{\Sh}{\operatorname{Sh}}
\newcommand{\End}{\operatorname{End}}

\begin{document}

\title{Ribbon complexes for the 0-Hecke algebra}

\author{Ayah Almousa}
\address{University of Kentucky}
\email{aalmousa@uky.edu}
\urladdr{\url{https://sites.google.com/view/ayah-almousa}}

\author{Bryan Lu}
\address{University of Washington}
\email{blu17@uw.edu}
\urladdr{\url{https://blu-bird.github.io}}

\keywords{0-Hecke algebra, noncommutative symmetric functions, Koszul algebras}

\date{\today}

\begin{abstract}
We construct explicit tableau-level maps between indecomposable projective modules for the type~$A$
$0$-Hecke algebra that assemble into canonical split short exact sequences lifting the basic ribbon
product rule in $\NSym$ via concatenation and near-concatenation.
Iterating these maps yields cochain complexes indexed by generalized ribbons; we prove these
complexes are acyclic in positive degrees and that their zeroth cohomology is the projective module
indexed by full concatenation.
We apply these complexes, together with VandeBogert's ribbon Schur module criterion, to prove
Koszulness for a naturally defined internally graded algebra object built from the $0$-Hecke tower.
Finally, we define skew projective modules whose noncommutative Frobenius characteristics realize
skewing by fundamental quasisymmetric functions on $\NSym$.
\end{abstract}

\maketitle

\section{Introduction}
The type $A$ $0$-Hecke algebra $\cH_n(0)$ is a well-studied degeneration of the group algebra
$\kk[\mathfrak S_n]$ with a rich combinatorial representation theory: both its simple and indecomposable projective modules are naturally indexed by compositions
(see \cites{Norton-Hecke,Carter-Hecke}).
The Grothendieck groups of the tower $\{\cH_n(0)\}_{n\ge 0}$ form dual
graded Hopf algebras identified with $\QSym$ and $\NSym$ (\cites{KT, bergeron-li}), where the
noncommutative Frobenius characteristic sends the indecomposable projectives
$\bfP_\alpha$ to the ribbon basis $R_\alpha\in \NSym$, while the simples
$\bfC_\alpha$ correspond to the fundamental basis $L_\alpha$ in $\QSym$.
A tableau model for projectives and simples, extending the classical Young tableau approach for
$\mathfrak S_n$, was developed by Huang \cite{heckeTab}.
The goal of this paper is to make this correspondence fully homological by constructing explicit,
canonical, functorial exact sequences and acyclic complexes of projective $0$-Hecke modules that
categorify fundamental ribbon identities in $\NSym$ via concrete tableau-level maps.

\subsection{Acyclic complexes lifting ribbon identities}
Our first goal is to give a representation-theoretic and homological lift of the basic ribbon
product identity
\[
R_\alpha R_\beta \;=\; R_{\alpha\cdot\beta} \;+\; R_{\alpha\odot\beta},
\qquad \alpha\vDash n,\ \beta\vDash m,
\]
together with its iterates.
Rather than working only at the level of Grothendieck groups, we construct explicit,
functorial maps between projective $0$-Hecke modules whose homological behavior reflects
this identity.

\begin{thmA}[\Cref{prop: concatenation-SES,thm: long-ribbon-complex-exact}] For any $\alpha\vDash n$ and $\beta\vDash m$, there is a split short exact sequence
\[
0\to \bfP_{\alpha\cdot\beta}\xlongrightarrow{\ \partial_{\alpha,\beta}\ } \bfP_{(\alpha,\beta)} \xlongrightarrow{\ \mu_{\alpha,\beta}\ } \bfP_{\alpha\odot\beta}\to 0.
\]
More generally, given a sequence $\vec\alpha=(\alpha^{(1)},\dots,\alpha^{(\ell)})$ of compositions, we build an explicit cochain complex $\cC(\vec\alpha)$ whose terms are projectives indexed by generalized ribbons obtained by mixing concatenation and near-concatenation (\Cref{const: longer-concatenation-complex}).
The complex $\cC(\vec\alpha)$ is acyclic in positive degrees and has zeroth cohomology
\[
H^0(\cC(\vec\alpha))\cong \bfP_{\alpha^{(1)}\cdot\alpha^{(2)}\cdots\alpha^{(\ell)}}.
\]
\end{thmA}
Complexes with the same ribbon combinatorics have appeared in several settings.
VandeBogert~\cite{vandebogert2025ribbon} characterizes Koszulness in terms of exactness of canonical
ribbon complexes.
In \cite{almousa2024veronese}, the authors use related Schur-module complexes to build
$\GL_n(\kk)$-equivariant resolutions over Veronese rings, while in~\cite{polishchuk2005quadratic}
similar shapes occur as homogeneous strands of the bar complex in the homological theory of quadratic
(and Koszul) algebras.

Our contribution here is a particularly transparent $0$-Hecke realization of these ideas, in which both the terms and differentials of the complexes admit explicit tableau-level descriptions.
The terms of our complexes are explicit indecomposable projective $0$-Hecke modules indexed
by generalized ribbons, and the differentials are given by simple tableau-level gluing maps.
As a result, the resulting complexes are canonical, functorial in the ribbon data, and admit
a direct combinatorial description that makes acyclicity straightforward to verify.

\subsection{Koszulness}
A second outcome of the existence and functoriality of these ribbon complexes is a Koszulness
statement for a naturally defined \emph{internally graded} algebra built from the tower
$\{\cH_n(0)\}_{n\ge 0}$.
Concretely, in \Cref{sec:koszul} we define an algebra
\[
A \coloneqq \bigoplus_{n\ge 0}\ \bigoplus_{\alpha\vDash n}\bfP_\alpha,
\]
whose multiplication is induced by the near-concatenation maps
$\mu_{\alpha,\beta}:\bfP_{(\alpha,\beta)}\to\bfP_{\alpha\odot\beta}$ and whose internal grading is
given by the number of columns of the ribbon $\diag(\alpha)$.
We then apply a criterion of VandeBogert~\cite{vandebogert2025ribbon} formulated in terms of ribbon
Schur modules to show that this internally graded algebra is Koszul.
\begin{thmB}[\Cref{thm: koszul}]
The internally graded algebra $A$ defined in \Cref{construction: weird-grading-koszul-alg} is Koszul.
\end{thmB}
While Koszulness is a classical homological finiteness property for graded quadratic algebras, its
verification in the present setting is subtle: the algebra $A$ is noncommutative, internally graded,
and defined using induction along the $0$-Hecke tower rather than generators and relations.
Our proof makes essential use of the explicit ribbon-indexed complexes constructed earlier, which help us
identify the relevant ribbon Schur modules and allow VandeBogert’s criterion to be applied
directly.

\subsection{Skew projective modules and branching}
Finally, we introduce skew analogues of the indecomposable projective $0$-Hecke modules.
For $\alpha\vDash n$ and $\beta\vDash k\le n$, we construct a projective $\cH_{n-k}(0)$-module
$\bfP_{\alpha/\beta}$ (see \Cref{def: skew-projective}) whose noncommutative Frobenius characteristic
recovers the skewing operator on $\NSym$:
\[
\chN(\bfP_{\alpha/\beta})=L_\beta^\perp R_\alpha.
\]
Thus skewing identities in $\NSym$ admit a uniform lift from Grothendieck groups to the level of
projective modules.

The case $\beta=(1)$ is especially concrete: $\bfP_{\alpha/(1)}$ models restriction from $\cH_n(0)$ to
the parabolic subalgebra $\cH_{1,n-1}(0)$ and fits into functorial split short exact sequences.
This yields an explicit branching rule in $\NSym$ (\Cref{prop:ses-projective-reps} and
\Cref{cor: nsym-skew-1-box}), so branching is realized by short exact sequences rather than only by
character identities.

\begin{thmC}[\Cref{sec:skew-projectives}]
For compositions $\alpha\vDash n$ and $\beta\vDash k\le n$, there exists a projective
$\cH_{n-k}(0)$-module $\bfP_{\alpha/\beta}$, well defined up to isomorphism, such that
\[
\chN(\bfP_{\alpha/\beta}) \;=\; L_\beta^\perp R_\alpha
\qquad\text{in }\NSym.
\]
In the special case $\beta=(1)$, the modules $\bfP_{\alpha/(1)}$ model restriction from
$\cH_n(0)$ to $\cH_{1,n-1}(0)$ and fit into functorial split short exact sequences of projective
modules.
\end{thmC}

\subsection{Organization}
The paper is organized as follows.
In \Cref{sec:combinatorics} we fix notation for compositions, ribbons, and standard ribbon tableaux.
In \Cref{sec:hopf} we recall the Hopf algebras $\NSym$ and $\QSym$ and the ribbon and fundamental bases.
In \Cref{subsec:zeroHecke} we review the representation theory of $\cH_n(0)$ and the Frobenius characteristic maps relating Grothendieck groups to $\NSym$ and $\QSym$.
In \Cref{sec:concat-complexes} we construct and analyze the acyclic complexes coming from concatenation and near-concatenation, and in \Cref{sec:koszul} we apply these constructions to Koszulness.
Finally, in \Cref{sec:skew-projectives} we define skew projective modules and deduce the branching and skewing formulas described above.

\section{Combinatorics of compositions and ribbons}\label{sec:combinatorics}
In this section, we fix notation for compositions and for (generalized) ribbon diagrams.
We also record the conventions relating compositions, descent sets, and ribbon tableaux that will be used throughout the paper.

\subsection{Compositions and weak compositions}

A \defi{composition} $\alpha = (\alpha_1,\dots, \alpha_k)$ of $n$, denoted
$\alpha\vDash n$, is an ordered list of positive integers whose sum is $n$.
We call the $\alpha_i$ \defi{parts} of $\alpha$.
If we allow some of the parts of $\alpha$ to be zero, we say that $\alpha$ is a
\defi{weak composition}.
Denote by $\Len(\alpha) \coloneqq k$ the \defi{length} of $\alpha$ and
$\abs{\alpha} \coloneqq n$ the \defi{size} of $\alpha$.
We say that a composition $\beta$ \defi{coarsens} $\alpha$ if $\beta$ is obtained
from $\alpha$ by repeatedly summing adjacent parts. Equivalently, $\alpha$
\defi{refines} $\beta$.

Denote by $\Comp$ the set of all compositions, and let $\Comp(n)$ be the set of
all compositions of $n$.
Analogously, denote by $\weakComp$ the set of all weak compositions and by
$\weakComp(n,\ell)$ the set of all weak compositions of $n$ with length $\ell$.

\subsection{Ribbons, generalized ribbons, and descents}

A \defi{ribbon} (= \defi{skew/rim hook} = \defi{border strip}) is a connected
diagram of boxes in $\NN\times\NN$ which does not contain a $2\times 2$ box.

A \defi{generalized ribbon} is a (possibly disconnected) diagram whose connected
components are ribbons, placed with the following convention:
each connected component lies strictly northeast of the previous one by a single
unit along the main diagonal.
Rows and columns of a generalized ribbon are defined globally using the ambient
grid $\NN\times\NN$.

A ribbon is uniquely determined by the number of boxes in each of
its rows. Given a composition $\alpha = (\alpha_1, \dots, \alpha_k)$ of $n$, we
denote by $\diag(\alpha)$ the ribbon with $k$ rows, where the $i$th row counting
from the bottom has $\alpha_i$ boxes.
For $\alpha\in\Comp(n)$, label the cells of $\diag(\alpha)$ with the numbers
$1$ through $n$, starting at the southwestern cell, moving through adjacent
cells, and ending at the northeastern cell. For example, we can label the ribbon
$\diag((3,1,1,2,4))$ as
\[
\ytableausetup{boxsize=1.1em}
\begin{ytableau}
\none& \none& \none &8 &9 &{10} &{11}\\
 \none& \none& 6 & 7 \\
 \none& \none& 5  \\
 \none& \none&4    \\
   1&2 &3    \\
\end{ytableau}.
\]

If the cell labeled $i+1$ lies strictly above the cell labeled $i$, we say that
the ribbon has a \defi{descent} at $i$. The resulting subset of $[n-1]$ is called
the \defi{descent set} of the ribbon.
In the example above, the descent set is
$\des(3,1,1,2,4)=\{3,4,5,7\}$.

For each subset $R \subseteq [n-1]$, there is exactly one ribbon with $n$ cells
and descent set $R$.
If $R\subseteq [n-1]$, denote by $\diag(R)$ the unique ribbon with descent
set $R$. We will often abuse notation and write only the composition corresponding
to $\diag(R)$. For example, we might write $\diag(\{3,4,5,7\}) = (3,1,1,2,4).$

\subsection{Standard ribbon tableaux and reading words}

A \defi{tableau} is a bijective filling of a diagram with the numbers
$\{1,\dots, n\}$.
A tableau on a (generalized) ribbon shape is \defi{standard} if its entries
strictly increase along rows from left to right and strictly increase along
columns from top to bottom, wherever two cells are horizontally or vertically
adjacent. Note that disconnected components impose no additional conditions.

Denote by $\SRT(\alpha)$ the set of standard ribbon tableaux of shape
$\diag(\alpha)$.
Every element of $\SRT(\alpha)$ is naturally in bijection with a permutation in
$\symm_n$ whose descent set coincides with the shape descent set $\des(\alpha)$.

Given a standard (generalized) ribbon tableau $T$, define its \defi{reading word}
$\word(T)$ as follows: read the entries of each connected component from
southwest to northeast along adjacent cells, and concatenate the resulting words
by ordering components from southwest to northeast.

For a permutation $w\in \symm_n$ written in one-line notation, write $\SRT(w)$ for
the set of standard ribbon tableaux of shape $\diag(\des(w))$.
For example, if $w = 451362\in \symm_6$, the corresponding ribbon tableau is
\[
\ytableausetup{centertableaux, boxsize=1em}
\yshort{\none\none\none 2, \none 136,45}
\]
and the associated ribbon shape is $\diag(w)=(2,3,1)$.
We say that $w$ is the reading word of the tableau $T$, and we write
$\word(T)=w$. With this convention, the map $T\mapsto \word(T)$ is a bijection between $\SRT(\alpha)$ and the set of
permutations $w\in\symm_n$ with $\des(w)=\des(\alpha)$: along the ribbon traversal, horizontal adjacencies
force rises and vertical adjacencies force descents.

Given a descent set $I\subseteq [n-1]$ or a composition $\alpha$, define
$w_0(I)=w_0(\alpha)$ to be the word corresponding to the tableau
$\tau_0(I)=\tau_0(\alpha)$ which fills each \emph{column} of the ribbon
corresponding to $\alpha$ with the numbers $1$ through $n$, read from top to
bottom and left to right.
Define $w_1(I)=w_1(\alpha)$ analogously using the tableau $\tau_1(I)=\tau_1(\alpha)$
which fills each \emph{row} from left to right, starting from the top row and
moving downward.

For a standard (generalized) ribbon tableau $T$, define its \defi{descent set} by
\[
\des(T)\coloneqq \{\, i\in[n-1] : \text{$i+1$ lies strictly above $i$ in $T$}\,\}.
\]

\subsection{Operations on compositions}

Given a composition $\alpha = (\alpha_1, \dots, \alpha_\ell)$, we can denote the \defi{reverse} of $\alpha$ as $\rev(\alpha) \coloneqq (\alpha_\ell,\dots, \alpha_1)$. The \defi{complement} of $\alpha$, denoted by $\alpha^c$, is the unique composition of $n$ whose descent set is $[n-1] \setminus \des(\alpha)$. The \defi{transpose} or \defi{conjugate} of $\alpha$ is $\alpha^\top \coloneqq \rev(\alpha^c) = (\rev(\alpha))^c$, or equivalently, the transpose of the ribbon diagram of $\alpha$. Note that we always have that $w_1(\alpha)^{-1} = w_1(\rev(\alpha))$.

\begin{example}\label{ex: tau-examples}
Let $\alpha = (2,3,1)$. Then we have:
\begin{gather*}
\tau_0(\alpha) = \yshort{\none\none\none 5, \none 246,13}\ , \; \tau_1(\alpha) = \yshort{\none\none\none 1,\none 234,56}\ , 
\tau_0(\rev(\alpha)) = \yshort{\none\none 46,135,2}, \, 
\tau_1(\rev(\alpha)) = \yshort{\none\none 12,345,6}
\\
\tau_0(\alpha^c) = \yshort{\none 56, \none 4, 23, 1}\ , \; \tau_1(\alpha^c) = \yshort{\none 12,\none 3, 45, 6}, \;  
\tau_0(\alpha^\top) = \yshort{\none\none 5, \none 26, \none 3, 14}\ , \; \tau_1(\alpha^\top) = \yshort{\none\none 1, \none 23, \none 4, 56}
\end{gather*}
In particular, note that $w_1(\alpha) = 562341$ and $w_1(\rev(\alpha)) = 634512 = w_1(\alpha)^{-1}$.
\end{example}

\begin{definition}\label{def: concatenation-near-concat} 
For two compositions $\alpha =(\alpha_1,\dots, \alpha_\ell)$ and $\beta = (\beta_1,\dots, \beta_m)$, define their \defi{concatenation} $\alpha\cdot \beta$ and their \defi{near-concatenation} $\alpha\odot\beta$ by:
\begin{align*}
    \alpha\cdot \beta &\coloneqq (\alpha_1,\dots, \alpha_\ell, \beta_1,\dots, \beta_m),\\
    \alpha\odot\beta &\coloneqq (\alpha_1,\dots, \alpha_{\ell-1}, \alpha_{\ell}+\beta_1, \beta_2,\dots, \beta_m)
\end{align*}
\end{definition}

\begin{conventions}\label{conventions: diagrams}
Throughout this paper, we will use the following conventions. Let $\diag(\alpha\cdot\beta)$ be the connected ribbon of size $n+m$.
Inside $\diag(\alpha\cdot\beta)$ there is a canonical decomposition into two subribbons:
the \emph{lower} subribbon consisting of the first $n$ cells along the southwest-to-northeast ribbon traversal (isomorphic to $\diag(\alpha)$),
and the \emph{upper} subribbon consisting of the last $m$ cells (isomorphic to $\diag(\beta)$).

We fix the following placement convention for the disconnected diagram $\diag(\alpha,\beta)$:
it is obtained from $\diag(\alpha\cdot\beta)$ by translating the upper $\beta$-subribbon \emph{one unit to the right}.
Thus $\diag(\alpha,\beta)$ has two connected components, and its $\alpha$-component and $\beta$-component are separated by a unit gap.
Similarly, $\diag(\alpha\odot\beta)$ is obtained from $\diag(\alpha,\beta)$ by translating the $\beta$-component \emph{one unit down}.
With this convention, in $\diag(\alpha\odot\beta)$ the northeasternmost cell of the $\alpha$-component becomes horizontally adjacent to (and immediately left of)
the southwesternmost cell of the $\beta$-component.
\end{conventions}

\begin{example} 
Let $\alpha = (3,2)$ and $\beta = (3,1,1)$. Then
\begin{align*}
\ytableausetup{centertableaux, boxsize=0.5em}
    \diag(\alpha\cdot\beta) &= \ydiagram{7+1,7+1,5+3,4+2,2+3}*[*(gray)]{7+1,7+1,5+3}, &\quad
    \diag(\alpha\odot\beta) &= 
\ydiagram{8+1,8+1,4+5,2+3}*[*(gray)]{8+1,8+1,6+3},
    &\quad
    \diag(\alpha, \beta) &= 
    \ydiagram{8+1,8+1,6+3,4+2,2+3}*[*(gray)]{8+1,8+1,6+3}.
\end{align*}    
\end{example}

\begin{definition}[Shuffle product]
Let $A=\kk\langle x_1,\dots,x_n\rangle$ be the free associative algebra on
$n$ letters, with basis given by words in the alphabet
$\{x_1,\dots,x_n\}$.
For two words
\[
u = u_1 u_2 \cdots u_p
\quad\text{and}\quad
v = v_1 v_2 \cdots v_q,
\]
the \emph{shuffle product} of $u$ and $v$ is defined by
\[
u \shuffle v
\;\coloneqq\;
\sum_{\sigma \in \Sh(p,q)}
w_{\sigma^{-1}(1)}\, w_{\sigma^{-1}(2)} \cdots w_{\sigma^{-1}(p+q)},
\]
where
\[
(w_1,\dots,w_{p+q})=(u_1,\dots,u_p,\,v_1,\dots,v_q),
\]
and $\Sh(p,q)$ denotes the set of $(p,q)$-shuffles, i.e.\ permutations
$\sigma\in S_{p+q}$ such that
\[
\sigma(1)<\cdots<\sigma(p)
\quad\text{and}\quad
\sigma(p+1)<\cdots<\sigma(p+q).
\]
The shuffle product is extended $\kk$-bilinearly to all of $A$.
\end{definition}

\begin{example}
Let $A=\kk\langle x_1,\dots,x_n\rangle$.  We wish to compute $(x_1x_2)\shuffle x_3$.
Here $p=2$ and $q=1$, so $\Sh(2,1)$ consists of the three shuffles placing the
single letter $x_3$ into one of the three slots among $x_1,x_2$ while
preserving the relative order of $x_1,x_2$. Hence
\[
(x_1x_2)\shuffle x_3
=
x_1x_2x_3 \;+\; x_1x_3x_2 \;+\; x_3x_1x_2.
\]
\end{example}

\section{Symmetric, noncommutative symmetric, and quasisymmetric functions}\label{sec:hopf}
In this section, we recall relevant facts about the Hopf algebras $\Sym$, $\NSym$, and $\QSym$. A helpful reference for this section is \cite{grinbergReiner-hopf-survey}.

\subsection{Symmetric functions}

The \defi{algebra of symmetric functions} $\Sym$ is the commutative polynomial algebra $\QQ[h_1,h_2,\dots]$ where $\deg(h_n) = n$. The $h_n$ are the \defi{complete homogeneous} generators. Given an integer $n\geq 0$ and a partition $\lambda = (\lambda_1 \geq \lambda_2\geq \dots \geq \lambda_k)$ of $n$, the \defi{complete homogeneous symmetric function} is the element of $\Sym$ given by
\[
h_\lambda \coloneqq h_{\lambda_1}\dots h_{\lambda_k},
\]
where we use the convention $h_\emptyset = 1$. Other important bases of $\Sym$ include the elementary symmetric functions $e_\lambda$, the monomial symmetric functions $m_\lambda$, and the Schur functions $s_\lambda$.

The algebra $\Sym$ is a self-dual Hopf algebra, which has the following pairing called the \defi{Hall inner product}:
\[
\langle h_\lambda, m_\mu\rangle = \delta_{\lambda,\mu}
\]
for all partitions $\lambda$ and $\mu$.
An element $f\in\Sym$ gives rise to an operator $f^\perp: \Sym\to\Sym$ according to the relation
\[
\langle fg,h\rangle = \langle g,f^\perp h\rangle \quad \text{for all } g,h\in\Sym.
\]

\subsection{Noncommutative symmetric functions}

The \defi{algebra of noncommutative symmetric functions} $\NSym$ is a noncommutative analogue of $\Sym$: it is the free associative algebra on generators $\{H_1,H_2,\dots\}$. Given a composition $\alpha = (\alpha_1,\dots, \alpha_k)$, the \defi{complete homogeneous function} associated to $\alpha$ is defined as
\[
H_\alpha \coloneqq H_{\alpha_1} H_{\alpha_2}\dots H_{\alpha_k}.
\]

Another basis for $\NSym$ is given by the \defi{noncommutative ribbon functions} $\{R_\alpha\}_{\alpha\in\Comp}$. These two bases of $\NSym$ satisfy the following relations:
\begin{align}
H_\alpha &= \sum_{\beta\text{ coarsens }\alpha} R_\beta, \label{eq: H-to-R}\\
R_\alpha &= \sum_{\beta \text{ coarsens } \alpha} (-1)^{\Len(\beta)-\Len(\alpha)} H_\beta. \label{eq: R-to-H}
\end{align}

The ribbon basis also satisfies the following product identity:
\begin{equation}\label{eq: product-of-two-ribbons}
R_\alpha R_\beta  = R_{\alpha \cdot \beta} + R_{\alpha \odot \beta}.
\end{equation}

The \defi{forgetful map} $\chi: \NSym\to \Sym$ is defined by sending the element $H_\alpha$ to the complete homogeneous symmetric function
\[
\chi(H_\alpha) \coloneqq h_{\alpha_1}h_{\alpha_2}\cdots h_{\alpha_k}\in\Sym
\]
and extending linearly. Note that, by the Jacobi--Trudi identity, this map sends $R_\alpha$ to the skew Schur function $r_\alpha \coloneqq s_{\diag(\alpha)}$ corresponding to the ribbon diagram $\diag(\alpha)$.

\subsection{Quasisymmetric functions}

The \defi{algebra of quasisymmetric functions} $\QSym$ is the graded Hopf algebra dual to $\NSym$ which contains $\Sym$ as a subalgebra. Let $\{M_\alpha\}_{\alpha\in\Comp}$ be the monomial quasisymmetric basis, which is dual to the $\{H_\alpha\}$ basis of $\NSym$. The \defi{fundamental quasisymmetric function} $L_\alpha$ is dual to the ribbon function $R_\alpha$ of $\NSym$. These two bases are related by
\[
L_\alpha = \sum_{\beta\text{ refines } \alpha} M_\beta.
\]

The structure maps for the Hopf algebra $\QSym$ in the basis $\{L_\alpha\}_{\alpha\in\Comp}$ of fundamental quasisymmetric functions are as follows:
\begin{itemize}
    \item Coproduct:
    \[
    \Delta L_\alpha = \sum_{\substack{(\beta,\gamma):\\ \beta\cdot\gamma = \alpha \text{ or } \beta\odot\gamma = \alpha}} L_\beta \otimes L_\gamma,
    \]
\item Product:
Let $n=\abs{\alpha}$ and $m=\abs{\beta}$. Choose any permutations $u\in\symm_n$ and $v\in\symm_m$
with descent compositions $\diag(u)=\alpha$ and $\diag(v)=\beta$.
Let $v^{+n}$ be the word obtained from $v$ by adding $n$ to each letter, so $v^{+n}$ is a permutation of
$\{n+1,\dots,n+m\}$.
Then
\[
L_\alpha L_\beta \;=\; \sum_{w\in u \shuffle v^{+n}} L_{\diag(w)}.
\]

\end{itemize}
Here we use the following notation:
\begin{itemize}
    \item $w_\alpha$ is any reading word for any standard tableau in $\SRT(\alpha)$;
    \item $w_\beta$ is any reading word of a standard tableau in $\SRT(\beta)$;
    \item $u\shuffle v^{+n}$ denotes the set of shuffles of the words $u$ and $v^{+n}$;
    \item $\diag(w)$ is the descent composition of the permutation $w$, or equivalently, the composition corresponding
    to $\des(w)$.
\end{itemize}

\begin{example}
To multiply $L_{(2)}L_{(1,1)}$, let $w_{(2)} = 12$ corresponding to the diagram $
\ytableausetup{boxsize=1em}\yshort{12}$ 
and $w_{(1,1)} = 43$ corresponding to the diagram $\yshort{3,4}$. Then
\begin{align*}
\ytableausetup{boxsize = 0.3em}
    L_{(2)}L_{(1,1)} &= \sum_{w\in 12\shuffle 43} L_{\diag(w)} \\ &= L_{\diag(1243)} &+& L_{\diag(1423)} &+& L_{\diag(1432)} &+& L_{\diag(4123)} &+& L_{\diag(4132)} &+&
    L_{\diag(4312)} \\
    &= L_{\begin{ytableau}
        \none&\none&\\ & &
    \end{ytableau}}
    &+&
    L_{\begin{ytableau}
    \none&&\\&&\none&\none
    \end{ytableau}}
    &+&
    L_{\begin{ytableau}
        \none&\\ \none&\\ &
    \end{ytableau}}
    &+& 
L_{\ydiagram{3,1}}
    &+&
L_{\begin{ytableau}
        \none&\\
        &\\
        &\none
    \end{ytableau}}
    &+& 
L_{\ydiagram{2,1,1}}.
\end{align*}
\end{example}

\subsection{Pairing between \texorpdfstring{$\NSym$}{NSym} and \texorpdfstring{$\QSym$}{QSym}}

The algebras $\NSym$ and $\QSym$ have a pairing $\langle\cdot,\cdot\rangle : \NSym \times \QSym \to \QQ$, defined under this duality by
\[
\langle R_\alpha, L_\beta\rangle = \delta_{\alpha, \beta}.
\]
The operation adjoint to multiplication with respect to the scalar product on $\Sym$ can be generalized to $\NSym$ and $\QSym$ by using this pairing: for $F\in \QSym$, denote $F^\perp$ to be the operator which acts on elements $H\in \NSym$ according to the relation
\begin{equation}\label{eq: nsym-qsym-pairing}
    \langle H,FG\rangle = \langle F^\perp H, G\rangle \quad \text{for all } G\in \QSym.
\end{equation}
Therefore, given $H\in \NSym$, we can compute $F^\perp(H)$ via the formula
\begin{equation}\label{eq:skew-operators-general}
F^\perp(H) = \sum_{\alpha\in\Comp} \langle H,F L_\alpha\rangle R_\alpha.
\end{equation}

\section{Representation theory of the type A 0-Hecke algebra}\label{subsec:zeroHecke}
In this section, we recall the basics of the representation theory of the $0$-Hecke algebra and how that representation theory connects to the Hopf algebras $\NSym$ and $\QSym$.
Recall the symmetric group $\symm_n$ on $n$ elements is generated by $\set{s_1, \dots, s_{n-1}}$, where $s_i$ denotes the transposition $(i, i+1)$ for $1 \leq i < n$. These transpositions satisfy relations $s_i^2 = 1$, $(s_i s_{i+1})^3 = 1$ for $1 \leq i < n-1$, and $(s_i s_j)^2 = 1$ otherwise, which are equivalent to the following relations:
\begin{align*}
     s_i^2 &= 1 & 1 \leq i < n, \\ 
     s_i s_{i+1} s_i &= s_{i+1} s_i s_{i+1} & 1 \leq i < n -1, \\
     s_i s_j &= s_j s_i & |i - j| > 1,\; 1 \leq i, j < n.
\end{align*}
The 0-Hecke algebra (of type A) on $n$ elements is a deformation of the group algebra of $\symm_n$, with generators satisfying relations that resemble those of the underlying Coxeter group.

\begin{definition}
    Consider generators $\pi_1, \dots, \pi_{n-1}$ satisfying the following relations: 
    \begin{align*}
        \pi_i^2 &=  \pi_i & 1 \leq i < n, \\
        \pi_i \pi_{i+1}  \pi_i &= \pi_{i+1}  \pi_i \pi_{i+1} & 1 \leq i < n-1, \\
        \pi_i \pi_j &= \pi_j \pi_i & |i-j| > 1,\; 1\leq i, j < n.
    \end{align*}
    Let $\kk$ be any field. The \defi{0-Hecke algebra} on $n$ elements, $\cH_n(0)$, is the $\kk$-algebra generated by $\set{\pi_1, \dots, \pi_{n-1}}$. Set $\ol\pi_i = \pi_i-1$; then $\ol\pi_i\pi_i = 0$, and one can check that $\cH_n(0)$ is generated by $\{\ol\pi_1,\dots, \ol\pi_{n-1}\}$ with the same relations as for the $\pi_i$ above, except that $\ol\pi_i^2 = -\ol \pi_i$.
\end{definition}

The representation theory of $\cH_n(0)$ has been studied extensively in \cite{Norton-Hecke}, \cite{Carter-Hecke}, \cite{KT}, and more recently \cite{heckeTab}. In particular, the representation theory of $\cH_n(0)$ is intimately related to $\NSym$ and $\QSym$. We outline this relationship in the next subsection.

By \cite{Norton-Hecke}, compositions $\alpha \vDash n$ 
index both the \textit{simple} $\cH_n(0)$-modules, denoted $\bfC_\alpha$, and the \textit{indecomposable projective representations} of $\cH_n(0)$, denoted $\bfP_\alpha$. Recall that the \defi{radical} $\rad(M)$ of a module $M$ is the intersection of all maximal submodules of $M$, and the \defi{top} of $M$ is $\Top(M) \coloneqq M/\rad(M)$, which is always semisimple. In our situation, we have $\Top(\bfP_\alpha) = \bfC_\alpha$.

An explicit construction for the simple and indecomposable projective $\cH_n(0)$-modules can be found in \cite{Norton-Hecke}. We consider another construction in terms of tableaux described in \cite{heckeTab}.

\begin{definition}[\cite{heckeTab}]\label{def: hecke-action}
    For a composition $\alpha \vDash n$, the vector space $\bfP_\alpha$ has basis consisting of standard ribbon tableaux of shape $\diag(\alpha)$. For $i \in [n-1]$ and $T$ a standard ribbon tableau of shape $\diag(\alpha)$, we define an action of $\cH_n(0)$ on $\bfP_\alpha$ by
    \[
    \ol \pi_i (T) := \begin{cases}
        -T & \text{if $i$ is in a higher row of $T$ than $i+1$}, \\ 
        0 & \text{if $i$ is in the same row of $T$ as $i+1$}, \\ 
        s_i(T) & \text{if $i$ is in a lower row of $T$ than $i+1$},
    \end{cases}
    \]
    where $s_i(T)$ denotes the tableau obtained from $T$ by swapping the entries $i$ and $i+1$.
\end{definition}

A \emph{generalized ribbon} corresponds to a sequence of compositions $\vec \beta = (\beta^{(1)}, \beta^{(2)}, \dots, \beta^{(\ell)})$, where each $\beta_i \vDash n_i$, and $\vec \beta$ corresponds to a skew diagram $\diag(\vec \beta)$ with disconnected ribbon components $\diag(\beta^{(i)})$ from bottom to top. Such generalized ribbons correspond to projective representations of $\cH_n(0)$, for $n = \sum_{i=1}^\ell n_i$, given by 
\[
\bfP_{\vec \beta} \cong \left[\bigotimes_{i=1}^\ell \bfP_{\beta^{(i)}} \right] {\Bigg\uparrow}_{\cH_{n_1, \dots, n_\ell}(0)}^{\cH_n(0)},
\]
$\cH_{n_1,\dots,n_\ell}(0) \cong \cH_{n_1}(0)\otimes_\kk \cdots \otimes_\kk \cH_{n_\ell}(0)$. The projective module $\bfP_{\vec\beta}$ has basis elements consisting of standard tableaux of shape $\diag(\vec \beta)$ and the same action as above. 

\begin{example}
    If $\alpha = (1, 2, 1)$, consider the standard tableaux of shape $\diag(\alpha) = \ydiagset \ydiagram{1+1,2,1}$:
    \ytableausetup{centertableaux, boxsize = 1em}
    \[
        \ytableaushort{\none1,23,4} \quad \ytableaushort{\none1,24,3} \quad 
        \ytableaushort{\none2,13,4} \quad \ytableaushort{\none2,14,3} \quad \ytableaushort{\none3,14,2}.
    \]
    We can draw the diagram showing the action of $\cH_4(0)$ on $\bfP_{(1,2,1)}$: 
    \begin{center}
        \begin{tikzcd}
 & {\ytableaushort{\none3,14,2}} \arrow[d, "\ol\pi_2"] \arrow["\ol\pi_1 = \ol\pi_3 = -1"', loop, distance=2em, in=215, out=145]   & \\ 
 & {\ytableaushort{\none2,14,3}} \arrow[ld, "\ol\pi_1"'] \arrow[rd, "\ol\pi_3"] \arrow["\ol\pi_2 = -1"', loop, distance=2em, in=35, out=325] & \\
{\ytableaushort{\none1,24,3} } \arrow[rd, "\ol\pi_3"'] \arrow["\ol\pi_1 = \ol\pi_2 = -1"', loop, distance=2em, in=215, out=145] &  & {\ytableaushort{\none2,13,4}} \arrow[ld, "\ol\pi_1"] \arrow["\ol\pi_2 = \ol\pi_3 = -1"', loop, distance=2em, in=35, out=325] \\
 & {\ytableaushort{\none1,23,4}} \arrow["{\ol\pi_1 = \ol\pi_3 = -1, \ol\pi_2 = 0}"', loop, distance=2em, in=305, out=235]  & 
\end{tikzcd}
    \end{center}

Here, $\ol\pi_i = -1$ on an arrow means that $\ol\pi_i$ acts on that basis element by multiplication by $-1$, and similarly $\ol\pi_i = 0$ on an arrow means that $\ol\pi_i$ takes that basis element to $0$. 
\end{example}

\begin{theorem}[\cite{heckeTab}]
    For $\alpha \vDash n$, the corresponding $\cH_n(0)$-module $\bfP_\alpha$ with the action described above is isomorphic to the projective indecomposable representation of $\cH_n(0)$ corresponding to $\alpha$ defined by Norton \cite{Norton-Hecke}.
\end{theorem}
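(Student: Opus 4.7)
The plan is to verify that Huang's tableau module realizes the projective cover of the simple $\cH_n(0)$-module $\bfC_\alpha$, and then to invoke the uniqueness of projective covers to conclude it agrees with Norton's construction.

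First, I would verify that \Cref{def: hecke-action} defines a genuine left $\cH_n(0)$-action on $\kk\,\SRT(\alpha)$. The quadratic relation $\bar\pi_i^{\,2}=-\bar\pi_i$ reduces to inspection of the three defining cases: if $i,i+1$ lie in the same row both sides vanish; if $i$ is higher, both sides act as $-\mathrm{id}$; if $i$ is lower then $\bar\pi_i T = s_iT$, and in $s_iT$ the entry $i$ now occupies the (formerly $i+1$) higher row, so $\bar\pi_i(s_iT) = -s_iT = -\bar\pi_iT$. The far-commutation $\bar\pi_i\bar\pi_j=\bar\pi_j\bar\pi_i$ for $|i-j|>1$ is automatic because the two actions see disjoint pairs $\{i,i+1\}$ and $\{j,j+1\}$. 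The braid relation is a finite but intricate case analysis on the mutual row-configuration of the triple $(i,i+1,i+2)$ in $T$, verifying $\bar\pi_i\bar\pi_{i+1}\bar\pi_i T = \bar\pi_{i+1}\bar\pi_i\bar\pi_{i+1} T$ directly in each configuration subject to the standardness constraints along rows and columns.

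Second, I would identify the simple top with $\bfC_\alpha$. A distinguished tableau $T_\alpha\in\SRT(\alpha)$ (built from the row/column fillings $\tau_0$, $\tau_1$ of the earlier subsection) serves as a cyclic generator on which every $\bar\pi_i$ acts as a scalar in $\{0,-1\}$, with the same pattern as the action of $\cH_n(0)$ on Norton's simple $\bfC_\alpha$. Applying the $\bar\pi_i$ repeatedly to $T_\alpha$ sweeps out all of $\SRT(\alpha)$, so $\bfP_\alpha$ is cyclic on $T_\alpha$. To pin down the simple top, I would introduce a partial order on $\SRT(\alpha)$ refining the reading-word order and show that each generator either acts by a scalar or strictly decreases in this order, so no proper submodule can contain $T_\alpha$; the unique maximal submodule is then the $\kk$-span of the remaining tableaux, and the quotient is $\bfC_\alpha$.

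Third, projectivity follows from a global dimension count: standard ribbon tableaux of all ribbon shapes on $n$ cells are in bijection with $\symm_n$ via the descent composition of the reading word, so $\sum_{\alpha\vDash n}\dim\bfP_\alpha = n! = \dim\cH_n(0)$. Combined with each $\bfP_\alpha$ being indecomposable with distinct simple top $\bfC_\alpha$, the decomposition of the regular representation of $\cH_n(0)$ into indecomposables must realize each tableau $\bfP_\alpha$ as the projective cover of $\bfC_\alpha$; uniqueness of projective covers then identifies Huang's $\bfP_\alpha$ with Norton's. The main obstacle is the braid-relation check in step one, which enumerates the several possible row-configurations of $(i,i+1,i+2)$ and matches $\bar\pi_i\bar\pi_{i+1}\bar\pi_i T$ with $\bar\pi_{i+1}\bar\pi_i\bar\pi_{i+1} T$ tableau-by-tableau, especially in the ``mixed'' configurations where applying a generator changes the row pattern midway through the triple product.
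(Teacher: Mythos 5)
Since the paper merely cites this theorem to Huang~\cite{heckeTab} and gives no proof of its own, there is no internal proof to compare against; your proposal must stand on its own merits. Your overall architecture --- verify the defining relations of $\cH_n(0)$, identify the simple top $\bfC_\alpha$ of the tableau module, and conclude via a dimension count over all compositions --- is sound, and the third step is a clean way to close: once each tableau module $\bfP_\alpha$ is known to have simple top $\bfC_\alpha$, it admits a surjection from Norton's projective cover $P(\bfC_\alpha)$, and the bijection $\bigsqcup_{\alpha\vDash n}\SRT(\alpha)\leftrightarrow\symm_n$ via reading words forces $\sum_\alpha\dim\bfP_\alpha = n! = \sum_\alpha\dim P(\bfC_\alpha)$, whence every such surjection is an isomorphism.

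However, step two contains a genuine internal contradiction. You describe a single tableau $T_\alpha$ that is simultaneously a cyclic generator of $\bfP_\alpha$ and an eigenvector for every $\bar\pi_i$ with eigenvalue in $\{0,-1\}$. These two properties are mutually exclusive unless $\dim\bfP_\alpha=1$: if every $\bar\pi_i$ acts on $T_\alpha$ by a scalar, then $\kk\cdot T_\alpha$ is already a $\cH_n(0)$-submodule, so iterating the generators cannot sweep out the rest of $\SRT(\alpha)$. In the tableau model these two roles are played by \emph{different} tableaux. The column-filling $\tau_0(\alpha)$ is the cyclic generator: on it, some $\bar\pi_i$ act by genuine swaps $s_i$ producing new standard tableaux (so its span is not a submodule), and iterating these swaps reaches all of $\SRT(\alpha)$. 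The row-filling $\tau_1(\alpha)$ is the one on which every $\bar\pi_i$ acts by a scalar; it spans a one-dimensional \emph{submodule} of $\bfP_\alpha$ (which the paper uses to give an abstract model of $\bfC_\alpha$), not a generating set. The simple top $\bfC_\alpha$ is read off from $\tau_0(\alpha)$ by passing to the quotient by the span of the remaining tableaux: there the swap actions vanish, each $\bar\pi_i$ becomes the scalar determined by whether $i$ sits above or below $i+1$ in $\tau_0(\alpha)$, and one recovers exactly the descent character of $\bfC_\alpha$ (check, e.g., $\alpha=(2,2)$: $\tau_0=\yshort{\none24,13}$ has $\bar\pi_2$ acting by $-1$ and $\bar\pi_1,\bar\pi_3$ by swaps, so the top has descent set $\{2\}$, correctly matching $\des(\alpha)$). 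Once you separate these two tableaux and their roles, the partial-order argument you sketch establishes that the span of the non-$\tau_0$ tableaux is the unique maximal submodule, and the dimension count in step three finishes the proof.
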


From \cite{KT}, we see that the noncommutative Frobenius characteristic $\mathbf{ch}$ sends $\bfP_\alpha$ to $R_\alpha$, the noncommutative ribbon functions. These $\bfP_\alpha$ are then natural analogues of the Specht modules corresponding to ribbon diagrams in the representation theory of $\symm_n$, as their characters $R_\alpha$ in $\NSym$ map surjectively onto the symmetric ribbon Schur functions $r_\alpha$ in $\Sym$.

We can also use the tableau approach to define the simple modules of $\cH_n(0)$.

\begin{definition}[\cite{heckeTab}*{\S~3.2}]
Let $\alpha \in \Comp(n)$, and let $\tau_1(\alpha)\in\SRT(\alpha)$ be the standard ribbon tableau that
fills each row from left to right, starting from the top row and moving downward (as in
\Cref{sec:combinatorics}). Define the $1$-dimensional $\cH_n(0)$-module
\[
\bfC_\alpha \coloneqq \kk \cdot \tau_1(\alpha),
\]
where $\ol\pi_i$ acts by the scalar prescribed by \Cref{def: hecke-action}. Then
$\bfC_\alpha \cong \Top(\bfP_\alpha)$, and the reading word of the generator is $w_1(\alpha)$.
\end{definition}

\begin{remark}
For the $0$-Hecke algebra, a $1$-dimensional module $M=\kk v$ is determined by its
\emph{descent set}
\[
D(M)\coloneqq \{\, i\in[n-1] : \ol\pi_i v = -v \,\}.
\]
By Norton~\cite{Norton-Hecke}, the simple module $\bfC_\alpha$ is the unique
$1$-dimensional $\cH_n(0)$-module with $D(\bfC_\alpha)=\des(\alpha)$.
In the tableau model of \Cref{def: hecke-action}, the span $\kk\cdot T$ is a $\cH_n(0)$-submodule
if and only if $\ol\pi_i(T)$ is always a scalar multiple of $T$, i.e., if no generator acts by
a nontrivial swap $T\mapsto s_i(T)$.
The tableau $\tau_1(\alpha)$ has this property, so $\kk\cdot \tau_1(\alpha)$ realizes
$\bfC_\alpha$.
\end{remark}

\subsection{Relating Grothendieck groups of the 0-Hecke algebra to \texorpdfstring{$\NSym$}{NSym} and \texorpdfstring{$\QSym$}{QSym}}
For each $n\ge 0$, let $G_0(\cH_n(0))$ be the Grothendieck group of the category of
finite-dimensional $\cH_n(0)$-modules, i.e., the free abelian group on isomorphism classes
$[M]$ modulo the relations $[M]=[L]+[N]$ for every short exact sequence
$0\to L\to M\to N\to 0$.
Let $K_0(\cH_n(0))$ be the analogous Grothendieck group for the exact category of
finite-dimensional \emph{projective} $\cH_n(0)$-modules.
We then set
\[
G_0(\cH_\bullet(0))\coloneqq \bigoplus_{n\ge 0}G_0(\cH_n(0)),
\qquad
K_0(\cH_\bullet(0))\coloneqq \bigoplus_{n\ge 0}K_0(\cH_n(0)).
\]
Krob and Thibon \cite{KT} showed that $G_0(\cH_\bullet(0))$ and $K_0(\cH_\bullet(0))$ are isomorphic to $\QSym$ and $\NSym$, respectively, as graded algebras. They did this by introducing the following maps.

\begin{definition}
Let $M$ be a finite-dimensional $\cH_n(0)$-module and consider a composition series for $M$, that is, a decreasing sequence
\[
M = M_1 \supset M_2 \supset \dots \supset M_k \supset M_{k+1} = 0
\]
where the successive quotients $M_i/M_{i+1}$ are simple. Then each $M_i/M_{i+1}$ is isomorphic to some $\bfC_{\alpha_i}$, and the Jordan–Hölder theorem ensures that the quasisymmetric function
\begin{equation}
\chQ(M) = \sum_{i=1}^k L_{\alpha_i}
\end{equation}
is independent of the choice of composition series. We call $\chQ$ the \defi{quasisymmetric Frobenius characteristic} of $M$.

Now, suppose that $M$ is a finite-dimensional projective $\cH_n(0)$-module. Then $M$ is isomorphic to a direct sum of indecomposable projective modules
\[
M = \bigoplus_{i=1}^m \bfP_{\alpha_i}.
\]
The \defi{noncommutative Frobenius characteristic} of $M$ is the noncommutative symmetric function $\chN$ defined by
\begin{equation}
\chN(M) \coloneqq \sum_{i=1}^m R_{\alpha_i}. 
\end{equation}
\end{definition}

In analogy with the theory of representations of $\symm_n$, Krob and Thibon \cite{KT} show that $\chQ: G_0(\cH_\bullet(0)) \to \QSym$ and $\chN: K_0(\cH_\bullet(0)) \to \NSym$ give isomorphisms of graded algebras. Bergeron and Li \cite{bergeron-li} show that in fact $K_0(\cH_\bullet(0))$ and $G_0(\cH_\bullet(0))$ have the structure of Hopf algebras, and that the maps $\chQ$ and $\chN$ respect this structure. For more details of this proof, the interested reader should consult \cite{bergeron-li}*{Example 4.3}.

For any module $M$ over a field $\kk$, we have that
$\dim_\kk \hom_{\cH_n(0)}\left(\bfP_\alpha, M\right)$ is the multiplicity of $\bfC_\alpha$ as a composition factor of $M$ \cite{Benson1991}*{Lemma 1.7.6}. Therefore, for any $\kk$, there is a natural pairing
\[
K_0(\cH_\bullet(0)) \times G_0(\cH_\bullet(0)) \to \kk
\]
given by
\begin{equation}\label{eq: pairing-on-0-hecke-reps}
\langle \bfP_\alpha, \bfC_\beta\rangle = \dim_\kk \hom_{\cH_n(0)}\left(\bfP_\alpha, \bfC_\beta\right) = \delta_{\alpha,\beta}.
\end{equation}
This pairing corresponds to the pairing between $\NSym$ and $\QSym$ described in \Cref{eq: nsym-qsym-pairing} in the way one would expect.

\section{Acyclic complexes from concatenation and near-concatenation}\label{sec:concat-complexes}

In this section, we introduce acyclic complexes of projective $0$-Hecke modules that categorify the ribbon identity
$R_\alpha R_\beta = R_{\alpha\cdot \beta} + R_{\alpha \odot \beta}$ in $\NSym$.
We will use the existence of these complexes and a result of VandeBogert~\cite{vandebogert2025ribbon}
to conclude in the next section that the internally graded algebra $A$ from
\Cref{construction: weird-grading-koszul-alg} (whose multiplication is induced by near-concatenation)
is Koszul.
Very similar complexes also appeared in \cite{polishchuk2005quadratic}*{\S 8.2} as homogeneous strands of the bar complex, as well as \cite{almousa2024veronese} for ribbon Schur functors, although the maps here are much simpler.

Throughout this section, if $\alpha\vDash n$ and $\beta\vDash m$, we view $\bfP_{\alpha\cdot\beta}$, $\bfP_{(\alpha,\beta)}$, and $\bfP_{\alpha\odot\beta}$ as the projective $0$-Hecke modules whose bases are standard tableaux of the ribbon shapes $\diag(\alpha\cdot\beta)$, $\diag(\alpha,\beta)$, and $\diag(\alpha\odot\beta)$, respectively, using the diagram conventions from \Cref{conventions: diagrams}.

\begin{definition}\label{def: del-mu-ses}
First we define the map
$\partial_{\alpha,\beta}: \bfP_{\alpha\cdot\beta} \to \bfP_{(\alpha,\beta)}$.
Let $U$ be a standard ribbon tableau of shape $\diag(\alpha\cdot\beta)$.
Define $\partial_{\alpha,\beta}(U)$ to be the tableau of shape $\diag(\alpha,\beta)$ obtained by translating
the $\beta$-subribbon one unit to the right and keeping the same entries in the corresponding cells.
This produces a standard tableau of $\diag(\alpha,\beta)$ because the translation does not change any
row/column relations within either component.
Since $\partial_{\alpha,\beta}$ is injective on the tableau basis, it is an injective linear map.

Now we define the map $\mu_{\alpha,\beta}: \bfP_{(\alpha,\beta)} \to \bfP_{\alpha\odot\beta}$.
Let $T$ be a standard tableau of shape $\diag(\alpha,\beta)$.
Let $T^{\downarrow}$ denote the filling of $\diag(\alpha\odot\beta)$ obtained by translating the entire $\beta$-component of $T$ one unit down and keeping all entries in the corresponding translated cells.
Define
\begin{equation}\label{eq: mu-map-ses-rewritten}
\mu_{\alpha,\beta}(T)\coloneqq
\begin{cases}
T^{\downarrow} & \text{if $T^{\downarrow}$ is a standard ribbon tableau of shape $\diag(\alpha\odot\beta)$,}\\
0 & \text{otherwise.}
\end{cases}
\end{equation}
Surjectivity of $\mu_{\alpha,\beta}$ is immediate: given any standard tableau $S$ of shape $\diag(\alpha\odot\beta)$, translate the $\beta$-portion of $S$ one unit \emph{up} to obtain a standard tableau $\widetilde S$ of shape $\diag(\alpha,\beta)$, and then $\mu_{\alpha,\beta}(\widetilde S)=S$.
\end{definition}

\begin{lemma}\label{lem: del-mu-equivariant} The maps $\partial_{\alpha,\beta}$ and $\mu_{\alpha,\beta}$ are both $\cH_{n+m}(0)$-equivariant maps.
\end{lemma}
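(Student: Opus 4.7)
The plan is to verify both equivariances by direct case analysis on the tableau basis, using that the $0$-Hecke action (\Cref{def: hecke-action}) depends only on the relative row positions of $i$ and $i+1$ and on the entry-swap $s_i$.

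For $\partial_{\alpha,\beta}$, the argument is essentially automatic: the map translates the $\beta$-subribbon of $U$ one unit to the right, preserving every row index as well as the entries in corresponding cells. So the ``higher/same/lower'' trichotomy that determines $\ol\pi_i U$ is identical to the one determining $\ol\pi_i \partial_{\alpha,\beta}(U)$, and $s_i$ commutes with horizontal translation since it only permutes entries between cells. This yields $\ol\pi_i\, \partial_{\alpha,\beta}(U) = \partial_{\alpha,\beta}(\ol\pi_i U)$ case by case.

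For $\mu_{\alpha,\beta}$, I would split according to whether $i, i+1$ both lie in the $\alpha$-component, both in the $\beta$-component, or straddle the boundary. In the within-component cases the rows either stay fixed (if in $\alpha$) or shift down by $1$ uniformly (if in $\beta$), so the relative row comparison and the effect of $s_i$ are preserved by the downward shift. A separate easy check handles what happens when $i$ or $i+1$ equals the boundary entry $a := T(\text{NE of }\alpha)$ or $b := T(\text{SW of }\beta)$: one uses the standardness condition $a < b$ (equivalent to $T^{\downarrow}$ being standard in $\diag(\alpha\odot\beta)$) together with $|i-(i+1)|=1$ to check that $\mu_{\alpha,\beta}(\ol\pi_i T)$ and $\ol\pi_i \mu_{\alpha,\beta}(T)$ match.

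The main obstacle is the straddling case, where the downward shift collapses ``$i$ lower than $i+1$'' in $T$ into ``$i$ same row as $i+1$'' in $T^{\downarrow}$, raising the potential mismatch $\ol\pi_i T = s_i T$ but $\ol\pi_i T^{\downarrow}=0$. To resolve this I would exploit that a standard $T^{\downarrow}$ has a strictly increasing composite row $\ell$ containing both consecutive integers $i$ and $i+1$; since no entry can lie strictly between them, the straddle is forced to place $i$ at the NE cell of $\alpha$ and $i+1$ at the SW cell of $\beta$. A quick inspection then shows that $(s_i T)^{\downarrow}$ has $i+1$ directly before $i$ in row $\ell$, violating row-increase, so $\mu_{\alpha,\beta}(s_i T) = 0 = \ol\pi_i T^{\downarrow}$. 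The symmetric configuration $i \in \beta$, $i+1 \in \alpha$ across the boundary already forces $a > b$, hence $\mu_{\alpha,\beta}(T)=0$, and both sides vanish. Exhausting these subcases completes the verification.
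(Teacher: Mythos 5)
Your proposal is correct and follows essentially the same strategy as the paper: a direct check on the tableau basis, splitting according to whether $i$ and $i+1$ lie in the same or different components, and observing that in the problematic straddling case both $\mu_{\alpha,\beta}(\ol\pi_i T)$ and $\ol\pi_i\mu_{\alpha,\beta}(T)$ vanish (because the swap places the larger of $i,i+1$ to the left in the glued row). The paper organizes the cases slightly differently (same component; different components but still different rows after translation; same row after translation), but the key mechanism and the decisive observation about the boundary cells $c_\alpha$, $c_\beta$ are identical to yours.
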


\begin{proof}
It suffices to check that both maps commute with each generator $\ol\pi_i$ under the action of
\Cref{def: hecke-action}.

For $\partial_{\alpha,\beta}$, we only translate the $\beta$-subribbon one unit to the right.
This does not change the row of any entry, so the relative row positions of $i$ and $i+1$ are
preserved, and swapping $i$ and $i+1$ commutes with the translation. Hence
\[
\partial_{\alpha,\beta}(\ol\pi_i U)=\ol\pi_i(\partial_{\alpha,\beta}U)
\qquad\text{for all }U\in\bfP_{\alpha\cdot\beta}.
\]

For $\mu_{\alpha,\beta}$, fix $T\in\bfP_{(\alpha,\beta)}$ and set $S\coloneqq \mu_{\alpha,\beta}(T)$.
If $i$ and $i+1$ lie in the same component of $\diag(\alpha,\beta)$, then translating the $\beta$-component
down does not change their relative row positions, and swapping commutes with translation, so
$\mu_{\alpha,\beta}(\ol\pi_iT)=\ol\pi_iS$.

Now assume $i$ and $i+1$ lie in different components. If after translating the $\beta$-component down the
entries $i$ and $i+1$ are still in different rows, then again their relative row order is unchanged, so
$\ol\pi_i$ commutes with the translation and the same equality holds.

It remains to treat the only exceptional situation: after translation, $i$ and $i+1$ lie in the same row
of $\diag(\alpha\odot\beta)$. In this case $\ol\pi_i(S)=0$ (either $S=0$, or $S$ is standard and $i,i+1$
lie in the same row).
On the source side, if $i$ lies in the $\beta$-component then $\ol\pi_iT=-T$, while if $i$ lies in the
$\alpha$-component then $\ol\pi_iT=s_i(T)$. In either subcase, translating the $\beta$-component down places
the $\beta$-component strictly to the right of the $\alpha$-component in that row, so the translated filling
would have $i$ to the right of $i+1$ and is therefore nonstandard. Hence
$\mu_{\alpha,\beta}(T)=0$ and also $\mu_{\alpha,\beta}(\ol\pi_iT)=0$, proving
$\mu_{\alpha,\beta}(\ol\pi_iT)=\ol\pi_i(\mu_{\alpha,\beta}T)$ in all cases.
\end{proof}

\begin{prop}\label{prop: concatenation-SES}
For $\alpha\in\Comp(n)$ and $\beta\in\Comp(m)$, one has maps
$\partial\coloneqq \partial_{\alpha,\beta}$ and $\mu\coloneqq \mu_{\alpha,\beta}$
giving rise to the following $\cH_{n+m}(0)$-equivariant split short exact sequence
of $\cH_{n+m}(0)$-modules over any field:
\begin{equation}\label{eq: SES-concat}
    0\to \bfP_{\alpha\cdot\beta}\xlongrightarrow{\partial} \bfP_{(\alpha,\beta)}
    \xlongrightarrow{\mu}\bfP_{\alpha\odot\beta} \to 0.
\end{equation}
\end{prop}

\begin{proof}
Equivariance of $\partial_{\alpha,\beta}$ and $\mu_{\alpha,\beta}$ was checked in
\Cref{lem: del-mu-equivariant}. By construction, $\partial_{\alpha,\beta}$ is injective and
$\mu_{\alpha,\beta}$ is surjective.

Let $c_\alpha$ be the northeasternmost cell of the $\alpha$-component and $c_\beta$ the southwesternmost
cell of the $\beta$-component in $\diag(\alpha,\beta)$. Translating the $\beta$-component down introduces
a single new adjacency, namely the horizontal adjacency with
$c_\alpha$ immediately to the left of $c_\beta$. Therefore, for a standard tableau $T$ of shape
$\diag(\alpha,\beta)$, the translated filling $T^\downarrow$ is standard if and only if
$T(c_\alpha)<T(c_\beta)$.

If $U$ is a standard tableau of $\diag(\alpha\cdot\beta)$, then the corresponding cells are vertically
adjacent with $c_\beta$ above $c_\alpha$, so $U(c_\beta)<U(c_\alpha)$. Since $\partial_{\alpha,\beta}$
does not change entries, $\partial_{\alpha,\beta}(U)$ satisfies $T(c_\beta)<T(c_\alpha)$, hence lies in
$\ker(\mu_{\alpha,\beta})$. Thus $\im(\partial_{\alpha,\beta})\subseteq\ker(\mu_{\alpha,\beta})$.

Conversely, if $T\in\bfP_{(\alpha,\beta)}$ satisfies $\mu_{\alpha,\beta}(T)=0$, then
$T(c_\beta)<T(c_\alpha)$. Translating the $\beta$-component one unit left produces a filling of the
connected ribbon $\diag(\alpha\cdot\beta)$, and the only new comparison is the vertical adjacency
$c_\beta$ above $c_\alpha$, which is standard by the same inequality. Hence this filling is a standard
tableau $U$ with $\partial_{\alpha,\beta}(U)=T$, so $\ker(\mu_{\alpha,\beta})\subseteq\im(\partial_{\alpha,\beta})$.

Therefore $\ker(\mu_{\alpha,\beta})=\im(\partial_{\alpha,\beta})$, so \eqref{eq: SES-concat} is exact.
Finally, $\bfP_{\alpha\odot\beta}$ is projective, so $\mu_{\alpha,\beta}$ splits and the sequence is split.
\end{proof}

\begin{example}
Let $\alpha = (2,2)$ and $\beta = (1,2,1)$. Then $\alpha\cdot\beta = (2,2,1,2,1)$ and $\alpha\odot\beta = (2,3,2,1)$.
Here are some examples of the maps $\partial_{\alpha,\beta}$ and $\mu_{\alpha,\beta}$:
\begin{gather}\ytableausetup{boxsize=1em}
\ytableaushort{\none\none\none 4, \none\none 17, \none\none 3, \none 25, 68}
\xrightarrow{\partial_{\alpha,\beta}}
\ytableaushort{\none\none\none\none 4, \none\none\none 17, \none\none\none 3,\none 25, 68}
\xrightarrow{\mu_{\alpha,\beta}} 0 \\
\ytableaushort{\none\none\none\none 4, \none\none\none 17, \none\none\none 5,\none 23, 68}
\xrightarrow{\mu_{\alpha,\beta}}
\ytableaushort{\none\none\none\none 4, \none\none\none 17,\none 235, 68}.
\end{gather}
\end{example}

Before we generalize the complex above, we introduce some notation.

\begin{construction}\label{const: longer-concatenation-complex}
Let $\vec\alpha = (\alpha^{(1)}, \dots, \alpha^{(\ell)})$ be a sequence of compositions with $\abs{\vec\alpha} = n$.
Given an indexing set $I\subseteq [\ell-1]$, denote by $\vec\alpha(I)$ the length $\ell-\abs{I}$ sequence of compositions obtained by replacing the $i$th comma of $\vec \alpha$ with $\odot$ for each $i\in I$.

Define the cochain complex $(\cC(\vec \alpha), \partial)$ of $\cH_n(0)$-modules by
\[
    \cC_i(\vec\alpha) \coloneqq \bigoplus_{\substack{I\subseteq [\ell-1] \\ \abs{I} = i}} \bfP_{\vec\alpha(I)}
\]
and define $\partial_i\colon \cC_i(\vec \alpha) \to \cC_{i+1}(\vec\alpha)$ as follows.
If $I=\{i_1<\cdots<i_r\}$ and $j\in[\ell-1]\setminus I$, set
\[
\sgn(j,I)\coloneqq (-1)^{\#\{t : i_t < j\}}.
\]
For any $T\in \bfP_{\vec\alpha(I)}$, define
\[
\partial_i(T) \coloneqq \sum_{j\in[\ell-1]\setminus I} \sgn(j,I)\,\phi_j(T),
\]
where $\phi_j(T)$ is obtained by applying the gluing operation of \Cref{prop: concatenation-SES}
to the adjacent pair of components $(j,j+1)$ while keeping all other components fixed.
\end{construction}

\begin{theorem}\label{thm: long-ribbon-complex-exact}
For any sequence of compositions $\vec\alpha = (\alpha^{(1)},\alpha^{(2)},\dots, \alpha^{(\ell)})$,
the pair $(\cC(\vec\alpha),\partial)$ is a cochain complex which is acyclic in strictly positive cohomological degrees, and
\[
H^0(\cC(\vec\alpha))\cong \bfP_{\alpha^{(1)}\cdot\alpha^{(2)}\cdot\dots\cdot \alpha^{(\ell)}}.
\]
\end{theorem}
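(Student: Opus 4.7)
My plan is to diagonalize $\cC(\vec\alpha)$ tableau-by-tableau. Since the passage from $\diag(\vec\alpha)$ (all commas) to $\diag(\vec\alpha(I))$ merely slides each component down one unit per near-concatenation, there is a canonical cell-wise bijection between the two diagrams, and a standard tableau of $\diag(\vec\alpha(I))$ corresponds to a standard tableau $T$ of the fully disconnected $\diag(\vec\alpha)$ subject only to the extra inequalities $T(c_j)<T(c_j')$ for each $j\in I$, where $c_j$ (resp.\ $c_j'$) is the northeast (resp.\ southwest) corner of component $j$ (resp.\ $j+1$). Defining $A(T)\coloneqq \{j\in[\ell-1]:T(c_j)<T(c_j')\}$, a basis of $\cC_i(\vec\alpha)$ is exactly the set of pairs $(I,T)$ with $|I|=i$, $T$ standard on $\diag(\vec\alpha)$, and $I\subseteq A(T)$. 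Under this relabeling the gluing map satisfies $\phi_j(I,T)=(I\cup\{j\},T)$ when $j\in A(T)\setminus I$ and $\phi_j(I,T)=0$ otherwise.

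The proof then has three steps. First, $\partial^2=0$: since for $j\neq k$ in $[\ell-1]\setminus I$ both compositions $\phi_k\phi_j$ and $\phi_j\phi_k$ simply adjoin $\{j,k\}$ to the mask, cancellation reduces to the sign identity
\[
\sgn(j,I)\,\sgn(k,I\cup\{j\}) \;=\; -\,\sgn(k,I)\,\sgn(j,I\cup\{k\}),
\]
which follows in one line from the definition of $\sgn$ assuming $j<k$. Second, because $\phi_j$ never changes $T$, the complex splits as a direct sum of vector-space subcomplexes
\[
\cC(\vec\alpha) \;=\; \bigoplus_{T}\,\cC^T,\qquad \cC^T\coloneqq \operatorname{span}\bigl\{(I,T):I\subseteq A(T)\bigr\}.
\]
Each $\cC^T$ is manifestly isomorphic to the tensor-product Koszul complex $\bigotimes_{j\in A(T)}(\kk\xrightarrow{\mathrm{id}}\kk)$: when $A(T)\neq\emptyset$ this is a tensor product of acyclic two-term complexes, hence acyclic; when $A(T)=\emptyset$ it is $\kk$ concentrated in degree $0$. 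Summing over $T$ yields $H^i(\cC(\vec\alpha))=0$ for $i>0$ and
\[
H^0(\cC(\vec\alpha))\;=\;\bigoplus_{T:\,A(T)=\emptyset}\kk\cdot(\emptyset,T).
\]

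Finally, the condition $A(T)=\emptyset$ says $T(c_j)>T(c_j')$ for all $j$, which under the cell-wise bijection $\diag(\vec\alpha)\leftrightarrow\diag(\alpha^{(1)}\cdots\alpha^{(\ell)})$ is exactly the extra vertical-standardness condition promoting a disconnected-standard filling to a standard tableau of the full concatenation ribbon. To promote this bijection of bases to an isomorphism of $\cH_n(0)$-modules, I compare the $\ol\pi_i$-action on both sides: within any component $\alpha^{(k)}$ the internal row structure is unchanged, while across components the concatenation convention places $c_j$ directly below $c_j'$, so the rows of $\alpha^{(j+1)}$ remain strictly above those of $\alpha^{(j)}$ exactly as in the disconnected diagram. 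Hence the relative row position of $i$ and $i+1$ is identical in both diagrams, and the actions coincide case-by-case via \Cref{def: hecke-action}.

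The main obstacle is this final Hecke-equivariance check. The Koszul decomposition $\cC(\vec\alpha)=\bigoplus_T\cC^T$ is purely a vector-space decomposition; the generator $\ol\pi_i$ typically moves $T$ to $s_i(T)$, which lies in a different summand. Consequently, the computation of $H^0$ as a vector space does not by itself identify its $\cH_n(0)$-module structure, and one must carefully verify that the extra inequality $T(c_j)>T(c_j')$ encoding kernel membership matches the extra adjacency introduced by concatenation, both combinatorially and in terms of the Hecke generators.
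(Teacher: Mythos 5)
Your proof is correct, and it takes a genuinely different route from the paper. The paper argues by induction on $\ell$: it writes $\cC(\vec\alpha)$ as the mapping cone of the morphism of complexes $\Phi:\ (\cC(\vec\alpha')\otimes\bfP_{\alpha^{(\ell)}})\!\uparrow\ \to\ \cC(\vec\alpha'')$ given by gluing the last seam, reduces to the base case \Cref{prop: concatenation-SES}, and reads off both acyclicity and the module structure of $H^0$ from the long exact sequence. You instead diagonalize the entire complex tableau-by-tableau: after the cell-wise identification of $\diag(\vec\alpha(I))$ with $(\diag(\vec\alpha),I)$-pairs, every $\phi_j$ is either an inclusion into the next mask or zero, so $\cC(\vec\alpha)$ splits as a direct sum over standard tableaux $T$ of the fully disconnected diagram, each summand a tensor-product Koszul complex on the set $A(T)$. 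This gives positive-degree acyclicity and the vector-space description of $H^0$ in one stroke, avoiding the induction, the mapping-cone bookkeeping, and the appeal to exactness of induction and $\otimes_\kk$. The trade-off, which you correctly flag as the main obstacle, is that the Koszul decomposition is only a vector-space decomposition, so the $\cH_n(0)$-module structure of $H^0$ does not come for free (as it does in the paper from the connecting map); you then supply that separately by noting that the cell-wise bijection to the concatenated ribbon is row-preserving, so the $\ol\pi_i$-action from \Cref{def: hecke-action}, which reads only the rows of $i$ and $i+1$, transports correctly. One small point worth making explicit in a final writeup: when justifying that $\diag(\vec\alpha(I))$ differs from $\diag(\vec\alpha)$ by exactly the constraints $T(c_j)<T(c_j')$ for $j\in I$, you should note that each near-concatenation introduces only the single horizontal adjacency at the seam and no other new row/column relations (the ribbon shape and the diagonal-offset placement guarantee that no other cells of adjacent or non-adjacent components become neighbors), so the pairwise constraints really are the only extra ones.
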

One may think of \Cref{thm: long-ribbon-complex-exact} as ``categorifying'' an identity in $\NSym$
which comes from recursively applying \Cref{eq: product-of-two-ribbons}.

\begin{example}
In the case where $\ell=4$, the exact complex from \Cref{thm: long-ribbon-complex-exact} looks like:
\begin{equation}
\label{lift-of-HG-for-l=4}
	0 \to
	\bfP_{\alpha\cdot\beta\cdot \gamma\cdot\delta}
	\to
	\bfP_{(\alpha,\beta,\gamma,\delta)}
	\xrightarrow[]{\partial_1}
	\begin{matrix}
	\bfP_{(\alpha \odot \beta,\gamma,\delta)} \\
	\oplus\\
	\bfP_{(\alpha,\beta \odot \gamma,\delta)} \\
    \oplus\\
    \bfP_{(\alpha,\beta,\gamma\odot\delta)}
	\end{matrix}
	\xrightarrow{\partial_2}
    	\begin{matrix}
	\bfP_{(\alpha \odot \beta\odot\gamma,\delta)} \\
	\oplus\\
	\bfP_{(\alpha \odot \beta, \gamma\odot\delta)} \\
    \oplus\\
\bfP_{(\alpha,\beta\odot\gamma\odot\delta)}
	\end{matrix}
    \xrightarrow{\partial_3}
	\bfP_{(\alpha \odot \beta \odot \gamma\odot\delta)}
	\to 0.
\end{equation}
The corresponding identity in $\NSym$ generalizing \Cref{eq: product-of-two-ribbons} looks like:
\begin{align*}
R_\alpha R_\beta R_\gamma R_\delta =\;& R_{\alpha\cdot\beta\cdot\gamma\cdot\delta}
+ \bigl(R_{\alpha\odot\beta}R_\gamma R_\delta + R_\alpha R_{\beta\odot \gamma} R_\delta + R_\alpha R_\beta R_{\gamma\odot\delta}\bigr)\\
&-\bigl(R_{\alpha\odot\beta\odot\gamma}R_\delta + R_{\alpha\odot\beta} R_{\gamma\odot\delta} + R_\alpha R_{\beta\odot\gamma\odot\delta}\bigr)
+ R_{\alpha\odot\beta\odot\gamma\odot\delta}.
\end{align*}
In the case where each of the $\alpha^{(i)}$ has length one, note that taking characters of this complex recovers \Cref{eq: R-to-H}.

To see an example of the maps, suppose $\alpha = (2)$, $\beta = (1,2)$, $\gamma = (2,1)$, and $\delta = (1)$. Then we have:
\begin{align*}
\ytableausetup{boxsize = 0.8em}
\ytableaushort{\none\none\none\none\none\none 6, \none\none\none\none\none 8, \none\none\none\none 79, \none\none 34,\none\none 5, 12} \xrightarrow{\partial_1}
&\ytableaushort{\none\none\none\none\none\none 6, \none\none\none\none\none 8, \none\none\none\none 79, \none\none 34,125} \\
&+ \ytableaushort{\none\none\none\none\none\none 6, \none\none\none\none\none 8, \none\none 3479,\none\none 5, 12} \\
\xrightarrow{\partial_2} \;&
\ytableaushort{\none\none\none\none\none\none 6, \none\none\none\none\none 8, \none\none 3479,125}
- \ytableaushort{\none\none\none\none\none\none 6, \none\none\none\none\none 8, \none\none 3479,125}  = 0.
\end{align*}
\end{example}

\begin{proof}
This proof is similar to the proof of \cite{almousa2024veronese}*{Thm.~3.14}, but is in fact simpler.

First we check that $\partial^2=0$.
Fix $I\subseteq[\ell-1]$ and distinct $j<j'$ in $[\ell-1]\setminus I$.
The maps $\phi_j$ are defined by commuting translations of northeast subdiagrams (with the convention
that the output is $0$ if a translation produces a nonstandard filling). Hence for all
$T\in \bfP_{\vec\alpha(I)}$,
\[
\phi_{j'}(\phi_j(T))=\phi_j(\phi_{j'}(T)),
\]
where both sides are understood to be $0$ if either intermediate step is nonstandard.
In $\partial_{i+1}\partial_i(T)$ the pair $(j,j')$ therefore appears twice, once as $j$ then $j'$ and once
as $j'$ then $j$, with opposite signs because
\[
\sgn(j,I)\,\sgn(j',I\cup\{j\}) \;=\; -\,\sgn(j',I)\,\sgn(j,I\cup\{j'\}).
\]
Thus the contributions cancel and $\partial_{i+1}\circ\partial_i=0$ for all $i$, so $\cC(\vec\alpha)$ is a
cochain complex.

We now prove exactness by induction on $\ell$ using a mapping cone.
For $\ell=2$ this is \Cref{prop: concatenation-SES}.
Assume $\ell\ge 3$ and write
\[
\vec\alpha=(\alpha^{(1)},\dots,\alpha^{(\ell)}),\qquad
\vec\alpha'=(\alpha^{(1)},\dots,\alpha^{(\ell-1)}),\qquad
\vec\alpha''=(\alpha^{(1)},\dots,\alpha^{(\ell-1)}\odot\alpha^{(\ell)}).
\]
Decompose the summands of $\cC_i(\vec\alpha)$ according to whether $\ell-1\in I$:
if $\ell-1\notin I$ we obtain the summand $\bfP_{(\vec\alpha'(I),\alpha^{(\ell)})}$, while if
$I=I'\cup\{\ell-1\}$ we obtain $\bfP_{\vec\alpha''(I')}$.
Equivalently, for each $i$ there is an isomorphism
\[
\cC_i(\vec\alpha)\ \cong\ 
\Bigl(\cC_i(\vec\alpha')\otimes \bfP_{\alpha^{(\ell)}}\Bigr){\uparrow}
\ \oplus\
\cC_{i-1}(\vec\alpha'').
\]

Under this identification, the differential on $\cC(\vec\alpha)$ has the standard mapping-cone form:
the terms with $j\le \ell-2$ restrict to the differentials on
$\bigl(\cC(\vec\alpha')\otimes \bfP_{\alpha^{(\ell)}}\bigr){\uparrow}$ and on $\cC(\vec\alpha'')$,
while the terms with $j=\ell-1$ give a morphism of complexes
\[
\Phi:\ \Bigl(\cC(\vec\alpha')\otimes \bfP_{\alpha^{(\ell)}}\Bigr){\uparrow}\longrightarrow \cC(\vec\alpha'')
\]
whose components are the gluing maps $\phi_{\ell-1}$ (with the usual sign $(-1)^i$ in degree $i$).
Thus $\cC(\vec\alpha)\cong \cone(\Phi)$, and we have a short exact sequence of complexes
\begin{equation}\label{eq: cone-ses}
0\to \cC(\vec\alpha'')[-1]\to \cC(\vec\alpha)\to
\Bigl(\cC(\vec\alpha')\otimes \bfP_{\alpha^{(\ell)}}\Bigr){\uparrow}\to 0.
\end{equation}

By the inductive hypothesis, $\cC(\vec\alpha')$ and $\cC(\vec\alpha'')$ are acyclic in positive degrees.
Since $(-)\otimes_\kk \bfP_{\alpha^{(\ell)}}$ and induction are exact, the complex
$\bigl(\cC(\vec\alpha')\otimes \bfP_{\alpha^{(\ell)}}\bigr){\uparrow}$ is also acyclic in positive degrees.
The long exact sequence in cohomology associated to \eqref{eq: cone-ses} then implies that
$\cC(\vec\alpha)$ is acyclic in positive degrees.

Finally, the low-degree part of the long exact sequence identifies $H^0(\cC(\vec\alpha))$ with the kernel
of the connecting map
\[
\Bigl(\bfP_{\alpha^{(1)}\cdots\alpha^{(\ell-1)}}\otimes \bfP_{\alpha^{(\ell)}}\Bigr){\uparrow}
\longrightarrow
\bfP_{\alpha^{(1)}\cdots\alpha^{(\ell-1)}\odot\alpha^{(\ell)}},
\]
which is the near-concatenation map
$\mu_{\alpha^{(1)}\cdots\alpha^{(\ell-1)},\,\alpha^{(\ell)}}$ from \Cref{prop: concatenation-SES}.
Its kernel is $\bfP_{\alpha^{(1)}\cdots\alpha^{(\ell)}}$, completing the induction.
\end{proof}

\section{Koszulness of \texorpdfstring{$\cH_\bullet(0)$}{H*(0)}}\label{sec:koszul}

In this section, we explain how VandeBogert's ribbon Schur module criterion~\cite{vandebogert2025ribbon}
applies to a natural \emph{internally} graded algebra structure built from the tower
$\{\cH_n(0)\}_{n\ge 0}$.
In this section, we must keep track of two gradings simultaneously:
an \emph{external} grading by $n$ (coming from $\cH_n(0)$) and an \emph{internal} grading (the
``column degree'').
Let
\[
\cH_{n_1,\dots,n_r}(0)\coloneqq \cH_{n_1}(0)\otimes_\kk \cdots \otimes_\kk \cH_{n_r}(0)
\subseteq \cH_{n_1+\cdots+n_r}(0)
\]
denote the usual parabolic subalgebra.
For $\cH_n(0)$-modules $M$ and $\cH_m(0)$-modules $N$, define their \defi{induction product}
\begin{equation}\label{eq: induction-product}
M \,\widehat\otimes\, N
\;\coloneqq\;
(M\otimes_\kk N)\uparrow^{\cH_{n+m}(0)}_{\cH_{n,m}(0)}.
\end{equation}
More generally, for $M_i$ a $\cH_{n_i}(0)$-module, write
\[
M_1\widehat\otimes\cdots\widehat\otimes M_r
\;\coloneqq\;
(M_1\otimes_\kk \cdots \otimes_\kk M_r)\uparrow^{\cH_{n_1+\cdots+n_r}(0)}_{\cH_{n_1,\dots,n_r}(0)}.
\]

\begin{construction}\label{construction: weird-grading-koszul-alg}
Fix a field $\kk$.
Define an object
\[
A \;\coloneqq\; \bigoplus_{n\ge 0}\ \bigoplus_{\alpha\vDash n}\bfP_\alpha
\]
where the summand $\bfP_\alpha$ is regarded as a projective $\cH_{|\alpha|}(0)$-module.

We equip $A$ with two gradings:
\begin{itemize}
\item the \emph{external degree} $n$ (the $\cH_n(0)$-module component), and
\item an \emph{internal degree} on indecomposable projectives defined by
\[
\deg_{\mathrm{int}}(\bfP_\alpha)\coloneqq \Len(\alpha^\top),
\]
the number of \textbf{columns} of the ribbon $\diag(\alpha)$.
\end{itemize}
Accordingly, write
\[
A \;=\; \bigoplus_{d\ge 0} A_d,
\qquad
A_d \;=\; \bigoplus_{n\ge 0} A_{d,n},
\qquad
A_{d,n}\coloneqq \bigoplus_{\substack{\alpha\vDash n\\ \Len(\alpha^\top)=d}} \bfP_\alpha.
\]
Thus $A_0=\bfP_{\emptyset}\cong \kk$ (in external degree $0$). In \Cref{lem:A-generated-in-degree-1} we show that $A$ is generated by $A_1$.

We now define a multiplication
\[
m:\ A\widehat\otimes A \longrightarrow A
\]
which is homogeneous for the internal grading and additive for the external grading.
It suffices to define $m$ on summands $\bfP_\alpha\widehat\otimes\bfP_\beta$.
By definition of generalized ribbon projectives,
\[
\bfP_\alpha\widehat\otimes\bfP_\beta
\;\cong\;
(\bfP_\alpha\otimes_\kk \bfP_\beta)\uparrow_{\cH_{|\alpha|,|\beta|}(0)}^{\cH_{|\alpha|+|\beta|}(0)}
\;\cong\;
\bfP_{(\alpha,\beta)}.
\]
Define $m$ on this summand to be the $\cH_{|\alpha|+|\beta|}(0)$-equivariant near-concatenation map
\[
m|_{\bfP_\alpha\widehat\otimes\bfP_\beta}\;:=\;
\mu_{\alpha,\beta}:\ \bfP_{(\alpha,\beta)}\longrightarrow \bfP_{\alpha\odot\beta}\subseteq A.
\]
Extending $\kk$-linearly over all summands defines $m$.

With respect to the internal grading, $m$ is homogeneous because
\[
\Len((\alpha\odot\beta)^\top)=\Len(\alpha^\top)+\Len(\beta^\top),
\]
since near-concatenation glues ribbons horizontally and hence adds the number of columns.
Associativity of $m$ follows from the coherence of the map $\mu_{\alpha,\beta}$ under iterated
near-concatenation, as checked in the proof of \Cref{thm: long-ribbon-complex-exact}. 
\end{construction}

\begin{lemma}\label{lem:A-generated-in-degree-1}
The internally graded algebra $A=\bigoplus_{d\ge 0}A_d$ is generated (under $m$) by $A_1$.
\end{lemma}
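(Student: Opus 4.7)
The plan is to identify $A_1$ explicitly and then exhibit every $\bfP_\alpha$ as the image of an iterated product of elements of $A_1$ under $m$, invoking the surjectivity of near-concatenation established in \Cref{prop: concatenation-SES}.

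First I would unwind the internal degree. For any composition $\alpha\vDash n$, the ribbon $\diag(\alpha)$ has exactly $|\alpha|-\Len(\alpha)+1$ columns, since consecutive rows overlap in precisely one column, and this number equals $\Len(\alpha^\top)=\deg_{\mathrm{int}}(\bfP_\alpha)$. Setting this equal to $1$ forces $|\alpha|=\Len(\alpha)$, i.e.\ every part of $\alpha$ is $1$. Hence
\[
A_1 \;=\; \bigoplus_{n\geq 1} \bfP_{(1^n)},
\]
the indecomposable projectives indexed by single-column ribbons.

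Next I would produce a canonical factorization of every $\bfP_\alpha$ as an iterated near-concatenation of generators. For $\alpha\in\Comp$, let $h_1,h_2,\dots,h_c$ denote the heights of the columns of $\diag(\alpha)$ read left to right, so $c=\Len(\alpha^\top)$ and each $h_i\geq 1$. I would claim
\[
\alpha \;=\; (1^{h_1}) \odot (1^{h_2}) \odot \cdots \odot (1^{h_c}).
\]
By definition, near-concatenation glues two ribbons by horizontally adjoining the northeast cell of the first to the southwest cell of the second, merging a single row. Iterating this with single-column ribbons of heights $h_1,\dots,h_c$ produces a ribbon whose $i$-th column has exactly $h_i$ cells. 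Since a ribbon is uniquely determined by its sequence of column heights, this ribbon is $\diag(\alpha)$. A short induction on $c$ using associativity of $\odot$ formalizes this if a more explicit argument is preferred, and one can sanity-check against examples like $(2,3,1)=(1)\odot(1,1)\odot(1)\odot(1,1)$.

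Finally I would assemble the conclusion. The iterated multiplication
\[
\bfP_{(1^{h_1})} \widehat\otimes \bfP_{(1^{h_2})} \widehat\otimes \cdots \widehat\otimes \bfP_{(1^{h_c})} \;\longrightarrow\; \bfP_\alpha
\]
is a composition of near-concatenation maps $\mu_{\beta,(1^{h_i})}$, each surjective by \Cref{prop: concatenation-SES}, so the composition is surjective. Thus each summand $\bfP_\alpha\subseteq A$ lies in the subalgebra of $A$ generated by $A_1$, and since $A=\bigoplus_\alpha \bfP_\alpha$ the lemma follows. I do not expect any serious obstacle beyond the column-height identification, which is a routine combinatorial observation once the correspondence between near-concatenation of single columns and column-height sequences is made precise.
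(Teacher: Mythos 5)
Your proof is correct and takes the same route as the paper: both identify $A_1$ with the single-column projectives $\bfP_{(1^m)}$ and express every $\bfP_\alpha$ as an iterated near-concatenation of single columns. Your write-up is in fact a bit more careful on two small points: you specify the columns are read left to right (the paper writes $\alpha^\top=(c_1,\dots,c_d)$, whose entries with the paper's convention $\alpha^\top=\rev(\alpha^c)$ list the column heights right to left, so the paper's displayed identity $\nu^{(1)}\odot\cdots\odot\nu^{(d)}=\alpha$ actually produces $\rev(\alpha)$; the intended identity uses $\alpha^c$, i.e.\ your left-to-right heights), and you explicitly invoke surjectivity of the near-concatenation maps from \Cref{prop: concatenation-SES} to conclude that the iterated products exhaust $\bfP_\alpha$, which the paper leaves implicit.
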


\begin{proof}
Let $\alpha$ be a composition and write $\alpha^\top=(c_1,\dots,c_d)$, so $d=\Len(\alpha^\top)$ is the number
of columns of $\diag(\alpha)$.
For each $j$, let $\nu^{(j)}=(1^{c_j})$ be the one-column ribbon of size $c_j$.
Then gluing these columns from left to right gives
\[
\nu^{(1)}\odot \nu^{(2)}\odot \cdots \odot \nu^{(d)}=\alpha.
\]
Since each $\bfP_{\nu^{(j)}}$ lies in $A_1$, iterated multiplication in $A$ produces $\bfP_\alpha$.
\end{proof}

\subsection{Ribbon Schur modules}
Following VandeBogert~\cite{vandebogert2025ribbon}, let $B=\bigoplus_{d\ge 0}B_d$ be a standard internally
graded $\kk$-algebra with multiplication maps
$m:B_a\widehat\otimes B_b\to B_{a+b}$.
For any sequence $\alpha=(\alpha_1,\dots,\alpha_r)$ of positive integers, define the \emph{ribbon Schur module}
$\mathbb{S}_B^\alpha$ to be the kernel of the map
\begin{equation}\label{eq: ribbon-schur-kernel-def}
B_{\alpha_1}\widehat\otimes \cdots \widehat\otimes B_{\alpha_r}
\longrightarrow
\bigoplus_{i=1}^{r-1}
B_{\alpha_1}\widehat\otimes \cdots \widehat\otimes B_{\alpha_i+\alpha_{i+1}} \widehat\otimes \cdots \widehat\otimes B_{\alpha_r},
\end{equation}
whose $i$th component is induced by the multiplication $m:B_{\alpha_i}\widehat\otimes B_{\alpha_{i+1}}\to
B_{\alpha_i+\alpha_{i+1}}$ (tensored with identities on the other factors).

We apply this construction to the internally graded algebra $A$ from
\Cref{construction: weird-grading-koszul-alg}.
Because $A$ is also externally graded (by $n$), each ribbon Schur module $\mathbb{S}_A^\alpha$ inherits an
external grading; we write $(\mathbb{S}_A^\alpha)_{(\bullet,N)}$ for its external degree $N$ summand, which is
a $\cH_N(0)$-module.

\begin{prop}\label{prop:Schur-for-A-decomp}
Let $A$ be the internally graded algebra from \Cref{construction: weird-grading-koszul-alg}, and fix
a composition $\alpha=(\alpha_1,\dots,\alpha_r)$.
For each external degree $N$, there is an isomorphism of projective $\cH_N(0)$-modules
\[
(\mathbb{S}_A^\alpha)_{(\bullet,N)}
\;\cong\;
\bigoplus_{\substack{\gamma^{(1)},\dots,\gamma^{(r)}\\ \Len((\gamma^{(j)})^\top)=\alpha_j\\ \sum_j|\gamma^{(j)}|=N}}
\bfP_{\gamma^{(1)}\cdot\gamma^{(2)}\cdots\gamma^{(r)}}.
\]
Moreover, under the identification
$\cC_0(\vec\gamma)=\bfP_{(\gamma^{(1)},\dots,\gamma^{(r)})}$ from
\Cref{const: longer-concatenation-complex},
the summand indexed by $\vec\gamma$ is the submodule
$H^0(\cC(\vec\gamma))\subseteq \bfP_{(\gamma^{(1)},\dots,\gamma^{(r)})}$.
\end{prop}

\begin{proof}
Fix $\alpha=(\alpha_1,\dots,\alpha_r)$ and an external degree $N$.
From \Cref{construction: weird-grading-koszul-alg},
\[
A_{\alpha_j}
=\bigoplus_{n\ge 0}\ \bigoplus_{\substack{\gamma\vDash n\\ \Len(\gamma^\top)=\alpha_j}} \bfP_\gamma
\qquad (1\le j\le r).
\]
Taking the induction product $A_{\alpha_1}\widehat\otimes\cdots\widehat\otimes A_{\alpha_r}$ and then
extracting external degree $N$ amounts to choosing compositions
$\vec\gamma=(\gamma^{(1)},\dots,\gamma^{(r)})$ with $\Len((\gamma^{(j)})^\top)=\alpha_j$ and
$\sum_j|\gamma^{(j)}|=N$, and then inducing along the tower. Concretely, each such choice yields the
generalized ribbon projective
\[
\bfP_{(\gamma^{(1)},\dots,\gamma^{(r)})}
\cong
\bigl(\bfP_{\gamma^{(1)}}\otimes_\kk\cdots\otimes_\kk \bfP_{\gamma^{(r)}}\bigr)
\uparrow^{\cH_N(0)}_{\cH_{|\gamma^{(1)}|,\dots,|\gamma^{(r)}|}(0)}.
\]
Hence
\begin{equation}\label{eq:domain-decomp}
\bigl(A_{\alpha_1}\widehat\otimes\cdots\widehat\otimes A_{\alpha_r}\bigr)_{(\bullet,N)}
\cong
\bigoplus_{\substack{\gamma^{(1)},\dots,\gamma^{(r)}\\ \Len((\gamma^{(j)})^\top)=\alpha_j\\ \sum_j|\gamma^{(j)}|=N}}
\bfP_{(\gamma^{(1)},\dots,\gamma^{(r)})}.
\end{equation}

By definition \eqref{eq: ribbon-schur-kernel-def} (with $B=A$), $\mathbb{S}_A^\alpha$ is the kernel of the
map whose $i$th component is induced by multiplication
$A_{\alpha_i}\widehat\otimes A_{\alpha_{i+1}}\to A_{\alpha_i+\alpha_{i+1}}$.
For our algebra $A$, on the summand
$\bfP_{\gamma^{(i)}}\widehat\otimes\bfP_{\gamma^{(i+1)}}\cong \bfP_{(\gamma^{(i)},\gamma^{(i+1)})}$
this multiplication is exactly the near-concatenation map
\[
\mu_{\gamma^{(i)},\gamma^{(i+1)}}:\ \bfP_{(\gamma^{(i)},\gamma^{(i+1)})}
\to \bfP_{\gamma^{(i)}\odot\gamma^{(i+1)}}
\qquad\text{(\Cref{prop: concatenation-SES}).}
\]

Since multiplication in $A$ is defined summandwise and extended by direct sum, the defining map for
$\mathbb{S}_A^\alpha$ preserves the index $\vec\gamma$: under the decomposition
\eqref{eq:domain-decomp} of the source (and the analogous decomposition of the target in external
degree $N$), the image of the summand $\bfP_{(\gamma^{(1)},\dots,\gamma^{(r)})}$ lies in
$\bigoplus_{i=1}^{r-1}\bfP_{\vec\gamma(\{i\})}$.
On this summand, the defining map agrees with the degree-$0$ differential
\[
d_0^{\vec\gamma}:\ \bfP_{(\gamma^{(1)},\dots,\gamma^{(r)})}\longrightarrow
\bigoplus_{i=1}^{r-1}\bfP_{\vec\gamma(\{i\})}
\]
in the complex $\cC(\vec\gamma)$ from \Cref{const: longer-concatenation-complex}.
Hence the kernel splits over $\vec\gamma$:
\[
(\mathbb{S}_A^\alpha)_{(\bullet,N)}
=\ker\!\Bigl(\bigoplus_{\vec\gamma} d_0^{\vec\gamma}\Bigr)
\cong
\bigoplus_{\vec\gamma}\ker(d_0^{\vec\gamma})
=
\bigoplus_{\vec\gamma} H^0(\cC(\vec\gamma)).
\]
By \Cref{thm: long-ribbon-complex-exact}, $H^0(\cC(\vec\gamma))\cong
\bfP_{\gamma^{(1)}\cdot\gamma^{(2)}\cdots\gamma^{(r)}}$, which gives the stated decomposition.
\end{proof}

\begin{theorem}\label{thm: koszul}
The internally graded algebra $A$ from \Cref{construction: weird-grading-koszul-alg} is Koszul.
\end{theorem}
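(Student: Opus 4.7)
The plan is to apply VandeBogert's ribbon Schur module criterion for Koszulness \cite{vandebogert2025ribbon} directly to the internally graded algebra $A$. That criterion reduces Koszulness of a standard internally graded algebra $B$ to a homological statement about the ribbon Schur modules $\mathbb{S}_B^\alpha$: roughly, that for every composition $\alpha$, a certain natural ribbon complex (whose first two maps define $\mathbb{S}_B^\alpha$ as a kernel) is exact in positive degrees, or equivalently that the higher ribbon-differentials are acyclic.

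First I would verify the standing hypotheses of the criterion. By \Cref{lem:A-generated-in-degree-1}, $A$ is generated in internal degree $1$, and $A_0\cong \kk$ by construction, so $A$ is standard internally graded.

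The main step is then to use \Cref{prop:Schur-for-A-decomp} together with \Cref{thm: long-ribbon-complex-exact}. \Cref{prop:Schur-for-A-decomp} provides an explicit decomposition of $(\mathbb{S}_A^\alpha)_{(\bullet,N)}$ as a direct sum, over sequences $\vec\gamma=(\gamma^{(1)},\dots,\gamma^{(r)})$ with $\Len((\gamma^{(j)})^\top)=\alpha_j$ and $\sum_j|\gamma^{(j)}|=N$, of projective modules $H^0(\cC(\vec\gamma))\cong \bfP_{\gamma^{(1)}\cdots\gamma^{(r)}}$. Because multiplication in $A$ is defined summandwise as the near-concatenation map $\mu_{\alpha,\beta}$, and because induction along the $0$-Hecke tower preserves the $\vec\gamma$-indexing, this decomposition persists not only for $\mathbb{S}_A^\alpha$ itself but for every term and every differential of VandeBogert's ribbon complex. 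The complex therefore splits as a direct sum indexed by $\vec\gamma$, and each summand matches (up to reindexing and signs) the complex $\cC(\vec\gamma)$ of \Cref{const: longer-concatenation-complex}. The acyclicity in positive degrees demanded by VandeBogert's criterion is then exactly the content of \Cref{thm: long-ribbon-complex-exact}.

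The main obstacle I anticipate is the precise combinatorial identification between VandeBogert's higher ribbon complex (a multi-step bar-like object assembled from iterated multiplications in $B$ indexed by refinements of $\alpha$) and the direct sum $\bigoplus_{\vec\gamma}\cC(\vec\gamma)$. One must check that \emph{all} higher-order terms, not only the defining kernel map for $\mathbb{S}_A^\alpha$, decompose cleanly along the $\vec\gamma$-splitting and that the differentials agree with the maps $\partial_i$ of \Cref{const: longer-concatenation-complex}, including sign conventions. Once this bookkeeping is in place, Koszulness follows with no further homological input beyond \Cref{thm: long-ribbon-complex-exact}.
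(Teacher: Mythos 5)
Your proposal is correct in spirit and shares the two key ingredients with the paper's proof: standardness of the grading (\Cref{lem:A-generated-in-degree-1}) and the $\vec\gamma$-splitting of ribbon Schur modules from \Cref{prop:Schur-for-A-decomp}. The notable difference is \emph{which} form of VandeBogert's criterion you invoke and, consequently, which homological input you use. The paper cites \cite{vandebogert2025ribbon}*{Thm~1.3}, which reduces Koszulness to exactness of the \emph{three-term} canonical sequence
\[
0 \to \mathbb{S}_{A}^{\alpha\cdot\beta} \to \mathbb{S}_{A}^{\alpha}\widehat\otimes \mathbb{S}_{A}^{\beta} \to \mathbb{S}_{A}^{\alpha\odot\beta} \to 0
\]
for every pair $\alpha,\beta$. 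After passing to a fixed external degree $N$ and applying \Cref{prop:Schur-for-A-decomp}, this sequence becomes a direct sum of the basic split short exact sequences of \Cref{prop: concatenation-SES}, and exactness is immediate; \Cref{thm: long-ribbon-complex-exact} is never needed. You instead invoke the long ribbon complex characterization and reduce to \Cref{thm: long-ribbon-complex-exact} via the $\vec\gamma$-splitting. This is valid, but it is strictly more work: you must identify the full multi-step complex with $\bigoplus_{\vec\gamma}\cC(\vec\gamma)$, including higher differentials and signs --- the exact bookkeeping you flag as a potential obstacle. The paper's route avoids that obstacle entirely because the three-term criterion only requires identifying the first differential, and \Cref{prop:Schur-for-A-decomp} already does that. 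If you want to streamline your argument, note that once you know the first differential decomposes summandwise, \Cref{prop: concatenation-SES} already finishes the job; the iterated complexes of \Cref{thm: long-ribbon-complex-exact} are used in \Cref{prop:Schur-for-A-decomp} but do not need to be reassembled against VandeBogert's higher differentials.
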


\begin{proof}
By \cite{vandebogert2025ribbon}*{Thm~1.3}, it suffices to show that for all compositions $\alpha,\beta$
the canonical sequence
\[
0 \to \mathbb{S}_{A}^{\alpha\cdot\beta} \to \mathbb{S}_{A}^{\alpha}\widehat\otimes \mathbb{S}_{A}^{\beta}
\to \mathbb{S}_{A}^{\alpha\odot\beta} \to 0
\]
is exact.

Since $A$ is externally graded and the multiplication $m$ preserves external degree, each ribbon
Schur module $\mathbb{S}_A^\gamma$ is externally graded, and the maps in the above sequence are
homogeneous for the external grading. Therefore, it suffices to check exactness in each external degree $N$.

Fix an external degree $N$.
By \Cref{prop:Schur-for-A-decomp}, each term in external degree $N$ decomposes as a direct sum of
indecomposable projective $\cH_N(0)$-modules indexed by generalized ribbons.
Moreover, VandeBogert’s canonical sequence is defined functorially from the multiplication of $A$,
and on each summand
\[
\bfP_{(\eta,\theta)} \subseteq
(\mathbb{S}_A^{\alpha}\widehat\otimes \mathbb{S}_A^{\beta})_{(\bullet,N)}
\]
the induced maps agree with the near-concatenation maps
$\partial_{\eta,\theta}$ and $\mu_{\eta,\theta}$ from
\Cref{prop: concatenation-SES}.
Consequently, the external degree $N$ strand of VandeBogert’s sequence is a direct sum of split
short exact sequences
\[
0\to \bfP_{\eta\cdot\theta}\xrightarrow{\ \partial_{\eta,\theta}\ }
\bfP_{(\eta,\theta)}\xrightarrow{\ \mu_{\eta,\theta}\ }\bfP_{\eta\odot\theta}\to 0,
\]
and is therefore exact.

Since $N$
was arbitrary, the canonical sequence is exact, and hence $A$ is Koszul by
\cite{vandebogert2025ribbon}*{Thm~1.3}.
\end{proof}

\section{Exact complexes for skew projective modules}\label{sec:skew-projectives}

In this section we define skew projective modules via the projective--simple pairing, identify their noncommutative characteristics with skew operators on $\NSym$, and deduce concrete descriptions in the cases of skewing by a single row or a single column. We then specialize to skewing by a single box and obtain short exact sequences modeling branching.

\subsection{Skew projectives and their characteristics}
The modules $\bfP_{\alpha/\beta}$ are designed so that taking noncommutative Frobenius characteristics turns them into the standard skewing operators $L_\beta^\perp$ on $\NSym$.

\begin{definition}\label{def: skew-projective}
Fix $\alpha\vDash n$ and $\beta\vDash k\le n$.
Let $\bfC_\beta$ denote the $1$-dimensional simple $\cH_k(0)$-module indexed by $\beta$, and let
\[
\bfC_\beta^\ast \coloneqq \hom_\kk(\bfC_\beta,\kk)
\]
be its $\kk$-linear dual. View $\bfC_\beta^\ast$ as a \emph{right} $\cH_k(0)$-module via
\[
(\varphi\cdot a)(v)\coloneqq \varphi(a\cdot v)
\qquad
(\varphi\in \bfC_\beta^\ast,\ a\in \cH_k(0),\ v\in \bfC_\beta).
\]
Restrict the $\cH_n(0)$-module $\bfP_\alpha$ to the parabolic subalgebra
$\cH_{k,n-k}(0)\cong \cH_k(0)\otimes_\kk \cH_{n-k}(0)$, and denote this restricted module by $\bfP_\alpha\downarrow^{\cH_n(0)}_{\cH_{k,n-k}(0)}.$
Define the \defi{skew projective module} to be the left $\cH_{n-k}(0)$-module
\[
\bfP_{\alpha/\beta}
\ \coloneqq\
\bfC_\beta^\ast\otimes_{\cH_k(0)}
\Bigl(\bfP_\alpha\downarrow^{\cH_n(0)}_{\cH_{k,n-k}(0)}\Bigr).
\]
Here $\cH_{n-k}(0)$ acts through the $\cH_{n-k}(0)$-factor of $\cH_{k,n-k}(0)$ on
$\bfP_\alpha\downarrow_{\cH_{k,n-k}(0)}^{\cH_n(0)}$, and this action commutes with the right
$\cH_k(0)$-action used in the tensor product.
\end{definition}

\begin{prop}\label{prop: skew-projective-represents}
Let $\bfP_{\alpha/\beta}$ be as in \Cref{def: skew-projective}. Then for every finite-dimensional
left $\cH_{n-k}(0)$-module $M$ there is a natural isomorphism
\[
\hom_{\cH_{n-k}(0)}(\bfP_{\alpha/\beta},M)
\ \cong\
\hom_{\cH_n(0)}\!\Bigl(\bfP_\alpha,\ (\bfC_\beta\otimes M)
\uparrow^{\cH_n(0)}_{\cH_{k,n-k}(0)}\Bigr).
\]
In particular, $\bfP_{\alpha/\beta}$ is projective.
\end{prop}

\begin{proof}
Fix a finite-dimensional left $\cH_{n-k}(0)$-module $M$.
View $\bfC_\beta\otimes M$ as a left $\cH_{k,n-k}(0)$-module via the $\cH_k(0)$-action on $\bfC_\beta$
and the $\cH_{n-k}(0)$-action on $M$. Frobenius reciprocity then gives a natural isomorphism
\begin{equation}\label{eq:FR-skew-final}
\hom_{\cH_n(0)}\!\Bigl(\bfP_\alpha,\ (\bfC_\beta\otimes M)\uparrow^{\cH_n(0)}_{\cH_{k,n-k}(0)}\Bigr)
\ \cong\
\hom_{\cH_{k,n-k}(0)}\!\Bigl(\bfP_\alpha\downarrow^{\cH_n(0)}_{\cH_{k,n-k}(0)},\ \bfC_\beta\otimes M\Bigr).
\end{equation}

Now identify $\cH_{k,n-k}(0)\cong \cH_k(0)\otimes_\kk \cH_{n-k}(0)$.
Let $\bfC_\beta^\ast=\hom_\kk(\bfC_\beta,\kk)$, viewed as a right $\cH_k(0)$-module via
\[
(\varphi\cdot a)(v)\coloneqq \varphi(a\cdot v)
\qquad (\varphi\in \bfC_\beta^\ast,\ a\in \cH_k(0),\ v\in \bfC_\beta).
\]
Since $\bfC_\beta$ is finite-dimensional, there is a natural $\cH_{k,n-k}(0)$-module isomorphism
\[
\bfC_\beta\otimes M \;\cong\; \hom_\kk(\bfC_\beta^\ast,M),
\]
where $\cH_k(0)$ acts on $\hom_\kk(\bfC_\beta^\ast,M)$ by precomposition on $\bfC_\beta^\ast$ and
$\cH_{n-k}(0)$ acts by its action on $M$. Therefore
\begin{equation}\label{eq:skew-HTA-step1}
\hom_{\cH_{k,n-k}(0)}\!\Bigl(\bfP_\alpha\downarrow^{\cH_n(0)}_{\cH_{k,n-k}(0)},\ \bfC_\beta\otimes M\Bigr)
\ \cong\
\hom_{\cH_{k,n-k}(0)}\!\Bigl(\bfP_\alpha\downarrow^{\cH_n(0)}_{\cH_{k,n-k}(0)},\ \hom_\kk(\bfC_\beta^\ast,M)\Bigr).
\end{equation}
Applying the usual Hom--tensor adjunction over $\cH_k(0)$ (and keeping $\cH_{n-k}(0)$-equivariance)
yields a natural isomorphism
\begin{equation}\label{eq:skew-HTA-step2}
\hom_{\cH_{k,n-k}(0)}\!\Bigl(\bfP_\alpha\downarrow^{\cH_n(0)}_{\cH_{k,n-k}(0)},\ \hom_\kk(\bfC_\beta^\ast,M)\Bigr)
\ \cong\
\hom_{\cH_{n-k}(0)}\!\Bigl(\bfC_\beta^\ast\otimes_{\cH_k(0)}\bigl(\bfP_\alpha\downarrow^{\cH_n(0)}_{\cH_{k,n-k}(0)}\bigr),\ M\Bigr).
\end{equation}
By \Cref{def: skew-projective}, the $\cH_{n-k}(0)$-module
$\bfC_\beta^\ast\otimes_{\cH_k(0)}\bigl(\bfP_\alpha\downarrow^{\cH_n(0)}_{\cH_{k,n-k}(0)}\bigr)$ is
precisely $\bfP_{\alpha/\beta}$. Combining \eqref{eq:FR-skew-final}, \eqref{eq:skew-HTA-step1}, and
\eqref{eq:skew-HTA-step2} gives the desired natural isomorphism
\[
\hom_{\cH_{n-k}(0)}(\bfP_{\alpha/\beta},M)
\ \cong\
\hom_{\cH_n(0)}\!\Bigl(\bfP_\alpha,\ (\bfC_\beta\otimes M)\uparrow^{\cH_n(0)}_{\cH_{k,n-k}(0)}\Bigr).
\]

Finally, the right-hand side is exact as a functor of $M$: induction
$(-)\uparrow^{\cH_n(0)}_{\cH_{k,n-k}(0)}$ is exact because $\cH_n(0)$ is free as a right
$\cH_{k,n-k}(0)$-module, and $\hom_{\cH_n(0)}(\bfP_\alpha,-)$ is exact because $\bfP_\alpha$ is
projective. Hence $\hom_{\cH_{n-k}(0)}(\bfP_{\alpha/\beta},-)$ is exact, so $\bfP_{\alpha/\beta}$ is
projective.
\end{proof}

\begin{prop}\label{prop: skew-projective-character}
For $\alpha\vDash n$ and $\beta\vDash k$ with $k\le n$, the noncommutative Frobenius characteristic
satisfies
\[
\chN(\bfP_{\alpha/\beta}) \;=\; L_\beta^\perp\, R_\alpha \qquad\text{in }\NSym.
\]
\end{prop}

\begin{proof}
Write
\[
\chN(\bfP_{\alpha/\beta})=\sum_{\gamma\vDash n-k} c_\gamma\,R_\gamma.
\]
By definition of $\chN$, we have that $c_\gamma$ is the
multiplicity of $\bfP_\gamma$ as a direct summand of $\bfP_{\alpha/\beta}$. Since
$\Top(\bfP_\gamma)=\bfC_\gamma$ and $\End(\bfC_\gamma)=\kk$, this multiplicity is (see \cite{Benson1991}*{Lemma 1.7.6})
\begin{equation}\label{eq:coeff-as-hom}
c_\gamma=\dim_\kk \hom_{\cH_{n-k}(0)}(\bfP_{\alpha/\beta},\bfC_\gamma).
\end{equation}

Applying \Cref{prop: skew-projective-represents} with $M=\bfC_\gamma$ gives
\[
\hom_{\cH_{n-k}(0)}(\bfP_{\alpha/\beta},\bfC_\gamma)
\ \cong\
\hom_{\cH_n(0)}\!\Bigl(\bfP_\alpha,\ (\bfC_\beta\otimes \bfC_\gamma)\uparrow\Bigr),
\]
hence, by the projective--simple pairing,
\[
c_\gamma=\bigl\langle \bfP_\alpha,\ (\bfC_\beta\otimes \bfC_\gamma)\uparrow \bigr\rangle.
\]
Sending this identity through the Frobenius characteristic maps and using that induction
corresponds to multiplication in $\QSym$ \cites{KT,bergeron-li} yields
\[
c_\gamma=\langle R_\alpha,\ L_\beta L_\gamma\rangle.
\]
Finally, adjointness of $L_\beta^\perp$ with respect to the $\NSym$--$\QSym$ pairing gives
\[
c_\gamma=\langle L_\beta^\perp R_\alpha,\ L_\gamma\rangle,
\]
so $c_\gamma$ is the coefficient of $R_\gamma$ in $L_\beta^\perp R_\alpha$.
Since this holds for all $\gamma$, we conclude $\chN(\bfP_{\alpha/\beta})=L_\beta^\perp R_\alpha$. 
\end{proof}

\begin{remark}\label{rem: skew-projective-shuffle}
Equivalently, the multiplicity of $\bfP_\gamma$ in $\bfP_{\alpha/\beta}$ is the coefficient of
$L_\alpha$ in the product $L_\beta L_\gamma$ in $\QSym$ (via the pairing between $\NSym$ and $\QSym$).
Using the shuffle rule for products of fundamental quasisymmetric functions, one may interpret this
multiplicity as a shuffle count.
\end{remark}

\subsection{The case \texorpdfstring{$\beta=(1)$}{b = (1)} and branching}
In the case $\beta=(1)$, the operator $L_{(1)}^\perp$ models restriction from $\cH_n(0)$ to the parabolic subalgebra $\cH_{1,n-1}(0)$.

\begin{lemma}\label{lemma: skew-by-one-is-branching}
Fix $\downarrow\coloneqq \downarrow_{\cH_{1,n-1}(0)}^{\cH_n(0)}$.
Then the following diagram commutes:
\[
\begin{tikzcd}[column sep =small] 
K_0(\cH_\bullet(0)) \arrow{rr}{\chN} \arrow{d}{(-){\downarrow}} & &
\NSym \arrow{d}{L_{(1)}^\perp \cdot} \\
K_0(\cH_\bullet(0)) \arrow{rr}{\chN} 
& & \NSym
\end{tikzcd}
\]
\end{lemma}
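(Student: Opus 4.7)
The plan is to reduce the statement to checking commutativity of the diagram on the additive generators $\{[\bfP_\alpha]\}$ of $K_0(\cH_\bullet(0))$. Both composites are additive, so it suffices to verify, for each $\alpha\vDash n$, the equality
\[
\chN\!\left(\bfP_\alpha\downarrow_{\cH_{1,n-1}(0)}^{\cH_n(0)}\right) \;=\; L_{(1)}^\perp R_\alpha
\qquad\text{in }\NSym.
\]

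The key input will be the observation that $\cH_1(0)\cong\kk$, since the $0$-Hecke algebra on one letter has an empty set of nontrivial generators. Consequently $\cH_{1,n-1}(0)\cong \cH_{n-1}(0)$, the simple module $\bfC_{(1)}\cong\kk$ has trivial action, and the functor $\bfC_{(1)}^\ast\otimes_{\cH_1(0)}(-)$ is naturally isomorphic to the identity on left $\cH_{n-1}(0)$-modules. Specializing \Cref{def: skew-projective} at $\beta=(1)$ then yields a natural isomorphism
\[
\bfP_{\alpha/(1)} \;\cong\; \bfP_\alpha\downarrow_{\cH_{1,n-1}(0)}^{\cH_n(0)}
\]
of $\cH_{n-1}(0)$-modules. \Cref{prop: skew-projective-represents} further guarantees that this restriction is projective, so its class in $K_0(\cH_{n-1}(0))$ is well-defined and the bottom horizontal arrow of the diagram lands where it should.

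The last step will be to combine this identification with \Cref{prop: skew-projective-character} applied at $\beta=(1)$ to obtain
\[
\chN(\bfP_\alpha\downarrow) \;=\; \chN(\bfP_{\alpha/(1)}) \;=\; L_{(1)}^\perp R_\alpha \;=\; L_{(1)}^\perp\chN(\bfP_\alpha),
\]
which is exactly the commutativity of the diagram on the generator $[\bfP_\alpha]$. The hardest part of this plan is honestly not hard at all: once one recognizes that $\cH_1(0)$ is the ground field, skewing by $(1)$ collapses to ordinary restriction and the lemma becomes a tautological corollary of \Cref{prop: skew-projective-character}. The only point I would want to verify carefully is that the right $\cH_1(0)$-action and the left $\cH_{n-1}(0)$-action appearing in \Cref{def: skew-projective} genuinely commute, but this is automatic because they act through commuting tensor factors of $\cH_{1,n-1}(0)\cong \cH_1(0)\otimes_\kk \cH_{n-1}(0)$.
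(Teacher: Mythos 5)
Your proof is correct, and it takes a somewhat different route than the paper's.

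The paper's proof argues from scratch: it observes that restriction along $\cH_{1,n-1}(0)\subset\cH_n(0)$ preserves projectivity (via Norton's freeness result), then invokes Frobenius reciprocity
\[
\hom_{\cH_n(0)}\bigl(\bfP_\alpha,(\bfC_{(1)}\otimes\bfC_\gamma)\uparrow\bigr)\cong\hom_{\cH_{n-1}(0)}\bigl(\bfP_\alpha\downarrow,\bfC_\gamma\bigr)
\]
directly and translates through the projective--simple pairing to get commutativity. You instead exploit the observation --- which the paper never spells out explicitly --- that $\cH_1(0)\cong\kk$ and $\bfC_{(1)}\cong\kk$, so that the functor $\bfC_{(1)}^\ast\otimes_{\cH_1(0)}(-)$ in \Cref{def: skew-projective} is the identity and $\bfP_{\alpha/(1)}$ is literally $\bfP_\alpha\downarrow$. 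From that identification, the lemma becomes an immediate specialization of the already-proved \Cref{prop: skew-projective-character}. Since that proposition's proof itself runs through Frobenius reciprocity (via \Cref{prop: skew-projective-represents}) and the pairing, the two arguments rest on the same engine; yours simply factors the computation through the general skew characteristic formula rather than redoing it for the special case. What your approach buys is economy and a clean conceptual statement (skewing by $(1)$ \emph{is} restriction, on the nose, not just in $K_0$); what the paper's approach buys is independence from the earlier development of $\bfP_{\alpha/\beta}$, making the lemma more self-contained. There is no circularity in your route, as \Cref{prop: skew-projective-character} precedes this lemma and does not depend on it. Your worry about the right $\cH_1(0)$- and left $\cH_{n-1}(0)$-actions commuting is resolved exactly as you say: they act through the two tensor factors of $\cH_{1,n-1}(0)$ and hence commute by construction.
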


\begin{proof}
By work of Norton \cite{Norton-Hecke}, the $0$-Hecke algebra $\cH_n(0)$ is free (hence projective) as a right module over any parabolic
subalgebra $\cH_{k,n-k}(0)$.
Consequently, restriction along $\cH_{1,n-1}(0)\subset\cH_n(0)$ sends projective modules to
projective modules, so the left vertical map is well defined on $K_0$.

For $\alpha\vDash n$ and $\gamma\vDash n-1$, Frobenius reciprocity gives
\[
\hom_{\cH_n(0)}\!\left(\bfP_\alpha,\ (\bfC_{(1)}\otimes\bfC_\gamma)\uparrow\right)
\cong
\hom_{\cH_{n-1}(0)}\!\left(\bfP_\alpha\downarrow,\ \bfC_\gamma\right).
\]
Translating this identity via the Frobenius characteristic maps and the pairing
\eqref{eq: nsym-qsym-pairing} yields the stated commutativity.
\end{proof}

\subsection{Row and column skewing}
We now give explicit decompositions of $\bfP_{\alpha/\beta}$ in the cases where $\beta$ is a
single row or a single column.

\begin{notation}\label{not: rowDelete}
Let $\alpha\in\weakComp(n)$ and let $\beta\in\weakComp(k,\Len(\alpha))$ with $\beta_i\le \alpha_i$ for all $i$.
Define $\rowDelete(\alpha,\beta)$ to be the generalized ribbon obtained from $\diag(\alpha)$ by removing
$\beta_i$ boxes from left to right in row $i$ (counting rows from the bottom).
\end{notation}

\begin{prop}\label{prop: skew-by-row-col-mods}
Let $\alpha\in\Comp(n)$ and $0<k<n$.
Then, in $K_0(\cH_{n-k}(0))$ (equivalently, up to isomorphism of projective $\cH_{n-k}(0)$-modules),
\begin{equation}\label{eq: skew-by-row}
    \bfP_{\alpha/(k)}
    \;\cong\;
    \bigoplus_{\substack{\beta\in\weakComp(k,\Len(\alpha))\\ (\alpha-\beta)_i>0\ \text{for}\ i<\Len(\alpha)}}\bfP_{\rowDelete(\alpha,\beta)}.
\end{equation}
Similarly,
\begin{equation}\label{eq: skew-by-column}
    \bfP_{\alpha/(1^k)}
    \;\cong\;
    \bigoplus_{\substack{\beta\in\weakComp(k,\Len(\alpha^\top))\\ (\alpha^\top-\beta)_i>0\ \text{for}\ i<\Len(\alpha^\top)}}\bfP_{\rowDelete(\alpha^\top,\beta)^\top}.
\end{equation}
\end{prop}

\begin{remark}
In \eqref{eq: skew-by-row}, the condition $(\alpha-\beta)_i>0$ for $i<\Len(\alpha)$ says that we may delete an entire row of $\alpha$ only if it is the \emph{top} row.
In \eqref{eq: skew-by-column}, the analogous condition says we may delete an entire row of $\alpha^\top$ only if it is the top row, i.e.\ we may delete an entire column of $\alpha$ only if it is the \emph{leftmost} column.
\end{remark}

\begin{proof}
We prove \eqref{eq: skew-by-row}; the column case follows by transposing ribbons and using that $(1^k)$ corresponds to the unique permutation $k\,\cdots\,2\,1$.

By \Cref{prop: skew-projective-character}, $\bfP_{\alpha/(k)}$ is determined in $K_0$ by the coefficients of $L_{(k)}^\perp R_\alpha$.
Equivalently, for each $\gamma\vDash n-k$, the multiplicity of $\bfP_\gamma$ in $\bfP_{\alpha/(k)}$ is the coefficient of $L_\alpha$ in $L_{(k)}L_\gamma$ (see \Cref{rem: skew-projective-shuffle}).

Choose the unique word $w_{(k)}=12\cdots k$ with descent composition $(k)$.
By the shuffle product rule in \Cref{sec:hopf}, the coefficient of $L_\alpha$ in $L_{(k)}L_\gamma$ counts shuffles of $w_{(k)}$ with a word of descent composition $\gamma$ that produce a word of descent composition $\alpha$.
Interpreting these shuffles on ribbon tableaux of shape $\alpha$, specifying an occurrence of the increasing subword $12\cdots k$ is equivalent to choosing, in each row $i$ of $\diag(\alpha)$, a number $\beta_i$ of boxes (from the left) occupied by letters from $\{1,\dots,k\}$.
Interpreting shuffles on standard ribbon tableaux of shape $\alpha$, specifying an occurrence of the increasing word $12\cdots k$ amounts to choosing which $k$ cells of $\diag(\alpha)$ are filled by the letters $\{1,\dots,k\}$.
Because the tableau is row- and column-strict, the set of cells occupied by $\{1,\dots,k\}$ is left-justified in each row.
Moreover, choosing all cells in a non-top row would disconnect the remaining ribbon, so we impose $(\alpha-\beta)_i>0$ for $i<\Len(\alpha)$.

Removing these $\beta_i$ boxes produces the generalized ribbon $\rowDelete(\alpha,\beta)$, filled by the remaining letters $\{k+1,\dots,n\}$.
Varying the shuffle on the remaining letters produces precisely the projective module $\bfP_{\rowDelete(\alpha,\beta)}$.
Summing over all admissible $\beta$ yields the decomposition \eqref{eq: skew-by-row} in $K_0(\cH_{n-k}(0))$.
Since $K_0$ of projectives is free on indecomposable projectives, the equality in $K_0$ upgrades to an isomorphism of projective modules.
\end{proof}

We can combine \Cref{prop: skew-by-row-col-mods} and the complex in \Cref{thm: long-ribbon-complex-exact} to better understand the structure of $\bfP_{\alpha/\beta}$ in the case where $\beta = (k)$ or $(1^k)$. The following examples suggest how to do so.

\begin{example}
    Set $\alpha = (1,3,2)$ and $\beta = (3)$. According to \Cref{prop: skew-by-row-col-mods}, in order to understand the structure of $\bfP_{\alpha / \beta}$, we should
    consider the following diagrams coming from removing the gray boxes (corresponding to $\beta$) from $\alpha$:
    \[
        \ydiagram{2+2,3,1}*[*(gray)]{2+2,0+1,1+0} \quad 
        \ydiagram{2+2,3,1}*[*(gray)]{2+1,0+2}.
    \]
    Note that the left diagram is allowed because the only time we are allowed to delete an entire row is if it is the topmost row.
    Therefore, we have that
    \[
    \bfP_{\alpha/\beta} \cong \bfP_{((1),(2))} \oplus \bfP_{((1),(1),(1))}.
    \]
    Let $\vec\gamma = ((1),(2))$ and $\vec\delta = ((1),(1),(1))$. We can take the direct sum $\cC(\vec\gamma) \oplus \cC(\vec\delta)$ of the complexes from \Cref{const: longer-concatenation-complex} to obtain the complex
\begin{equation*}
	0 \to 
    \begin{matrix}
	\bfP_{(1,2)} \\
    \oplus \\
    \bfP_{(1,1,1)}
    \end{matrix}
	\to 
    \begin{matrix}
        \bfP_{\alpha / \beta} \\
        \Vert \\
        \bfP_{((1),(2))} \\
        \oplus \\
        \bfP_{((1),(1),(1))}
    \end{matrix}
	\to
	\begin{matrix}
	\bfP_{(3)} \\
	\oplus\\
	\bfP_{((1),(2))} \\
    \oplus\\
    \bfP_{((2),(1))}
	\end{matrix}
	\to
    \bfP_{(3)}
	\to 0,
\end{equation*}
    which is exact because it is the direct sum of two exact complexes.
    Therefore, we have that $\bfP_{\alpha / \beta} \cong \bfP_{(2,1)} \oplus \bfP_{(1,1,1)} \oplus \bfP_{((1),(2))} \oplus \bfP_{((2),(1))}$.
    Applying the concatenation/near-concatenation identity one more time, we find that the complete decomposition of $\bfP_{\alpha/\beta}$ into indecomposable projectives is
    \[
    \bfP_{(1,3,2)/(3)} \cong \bfP_{(1,2)}^2 \oplus \bfP_{(1,1,1)} \oplus \bfP_{(3)}^2 \oplus \bfP_{(2,1)}.
    \]
\end{example}

\begin{example}
Set $\alpha = (1,3,2)$ and $\beta = (1,1)$. Then $\alpha^\top = \rev(\alpha^c) = (1,2,1,2)$.
By \Cref{prop: skew-by-row-col-mods}, we should consider the following diagrams where we removed the gray boxes (corresponding to $\beta$) from $\alpha$ in order to understand $\bfP_{\alpha/\beta}$:
    \[
        \ydiagram{2+2,3,1}*[*(gray)]{4+0,0+1,0+1} \quad 
        \ydiagram{2+2,3,1}*[*(gray)]{2+1,0+1}.
    \]
    Note that the leftmost diagram is allowed because the only time we can remove an entire row from $\alpha^\top$ is if it is the top row, which corresponds to the leftmost column of the original diagram for $\alpha$. Therefore, we have that $\bfP_{\alpha/(1,1)} = \bfP_{(2,2)} \oplus \bfP_{((1),(2),(1))}$. Applying \Cref{thm: long-ribbon-complex-exact} to the diagram $\vec\gamma = ((1),(2),(1))$ iteratively, we obtain the diagram
\[
\begin{tikzcd}
	&&& 0 \\
	&&& {\mathbf{P}_{(1,3)}\oplus\mathbf{P}_{(3,1)}} \\
	0 & {\mathbf{P}_{(1,2,1)}} & {\mathbf{P}_{((1),(2),(1))}} & {\mathbf{P}_{((1),(3))}\oplus \mathbf{P}_{((3),(1))}} & {\mathbf{P}_{(4)}} & 0 \\
	&&& {\mathbf{P}_{(4)}^2} \\
	&&& 0
	\arrow[from=1-4, to=2-4]
	\arrow[from=2-4, to=3-4]
	\arrow[from=3-1, to=3-2]
	\arrow[from=3-2, to=3-3]
	\arrow[from=3-3, to=3-4]
	\arrow[from=3-4, to=3-5]
	\arrow[from=3-4, to=4-4]
	\arrow[from=3-5, to=3-6]
	\arrow[from=4-4, to=5-4]
\end{tikzcd}
\]
Combining everything together, we see that
\[
\bfP_{(1,3,2)/(1,1)} \cong \bfP_{(2,2)} \oplus \bfP_{(1,2,1)} \oplus \bfP_{(1,3)} \oplus \bfP_{(3,1)} \oplus \bfP_{(4)}.
\]
\end{example}

\subsection{Skewing by a single box and a branching short exact sequence}

We now specialize to $\beta=(1)$.
Deleting one box from a ribbon can disconnect it into at most two connected components; we make this explicit.

\begin{definition}
For a generalized ribbon diagram $\rho$, let $\components(\rho)$ denote the ordered list of its
connected components, listed from southwest to northeast.
\end{definition}

\begin{prop}\label{prop:ses-projective-reps}
Let $\alpha\vDash n$ and let $\epsilon_i$ denote the $i$th standard basis vector in $\ZZ^{\Len(\alpha)}$.
For each $i\in[\Len(\alpha)]$ with $\alpha_i>1$, the generalized ribbon $\rowDelete(\alpha,\epsilon_i)$ has
at most two connected components. Write its (ordered) list of components as
\[
\components(\rowDelete(\alpha,\epsilon_i)) \;=\;
\begin{cases}
(\alpha_i',\alpha_i'') & \text{if it is disconnected},\\
(\alpha-\epsilon_i) & \text{if it is connected}.
\end{cases}
\]
Then there is a split short exact sequence of $\cH_{1,n-1}(0)$-modules
\[
0 \to \bigoplus_{\alpha_i>1} \bfP_{(1)} \otimes \bfP_{\alpha - \epsilon_i}
\xrightarrow[]{\partial}
\bfP_{\alpha}{\downarrow}^{\cH_n(0)}_{\cH_{1, n-1}(0)} 
\xrightarrow[]{\mu}
\bigoplus_{\alpha^\top_j>1} \bfP_{(1)} \otimes \bfP_{(\alpha^\top - \epsilon_j)^\top}
\to 0,
\]
where on each disconnected summand $\bfP_{(1)}\otimes \bfP_{(\alpha_i',\alpha_i'')}$ the maps are given by
$1\otimes \partial_{\alpha_i',\alpha_i''}$ and $1\otimes \mu_{\alpha_i',\alpha_i''}$ as in \Cref{def: del-mu-ses},
and on connected summands $\bfP_{(1)}\otimes \bfP_{\alpha-\epsilon_i}$ the map $\partial$ is the natural inclusion
of that direct summand in the decomposition of $\bfP_\alpha\downarrow$.
\end{prop}

\begin{proof}
By \Cref{lemma: skew-by-one-is-branching} and \Cref{prop: skew-by-row-col-mods} (with $k=1$),
\[
\bfP_{\alpha}\downarrow_{\cH_{1,n-1}(0)}^{\cH_n(0)}
\ \cong\
\bfP_{(1)} \otimes \bfP_{\alpha/(1)}
\ \cong\
\bfP_{(1)} \otimes \bigoplus_{\alpha_i>1} \bfP_{\rowDelete(\alpha,\epsilon_i)}.
\]

Now fix $i$ with $\alpha_i>1$.
If $i=1$, then $\rowDelete(\alpha,\epsilon_1)=\diag(\alpha-\epsilon_1)$ is connected.
If $i>1$, then deleting the leftmost cell in row $i$ removes the unique vertical adjacency between rows $i-1$ and $i$, hence $\rowDelete(\alpha,\epsilon_i)$ has exactly two connected components; write
\[
\components(\rowDelete(\alpha,\epsilon_i))=(\alpha_i',\alpha_i'').
\]

By \Cref{prop: concatenation-SES}, each such two-component generalized ribbon projective sits in a split short exact sequence
\[
0 \to \bfP_{\alpha_i'\cdot\alpha_i''} \xrightarrow{\ \partial\ } \bfP_{(\alpha_i',\alpha_i'')} \xrightarrow{\ \mu\ } \bfP_{\alpha_i'\odot\alpha_i''} \to 0.
\]
In our situation, $\alpha_i'\cdot\alpha_i''=\alpha-\epsilon_i$, while the near-concatenation $\alpha_i'\odot\alpha_i''$ corresponds to removing one box from a column of $\alpha$, i.e.\ it is of the form $(\alpha^\top-\epsilon_j)^\top$ for the appropriate column index $j$.
Taking the direct sum over all $i$ and tensoring with $\bfP_{(1)}$ yields the claimed short exact sequence, and it splits because all terms are projective.
\end{proof}

\begin{cor}\label{cor: nsym-skew-1-box}
For any $\alpha \in \Comp(n)$ with $n>1$, we have
\begin{equation}\label{eq: NSym-ribbon-branching}
L_{(1)}^\perp R_\alpha = \sum_{\alpha_i>1} R_{\alpha-\epsilon_i} + \sum_{\alpha^\top_j>1} R_{(\alpha^\top-\epsilon_j)^\top},
\end{equation}
where $\epsilon_i$ denotes the $i$th standard basis vector in $\ZZ^{\Len(\alpha)}$.
\end{cor}

\begin{remark}
    See Miller \cite{Miller21} and references therein for related identities in $\Sym$.
\end{remark}

\begin{example}
By \Cref{prop:ses-projective-reps}, the following is a short exact sequence of projective $\cH_{1,4}(0)$-modules:
\[
0 \to \bfP_{(1)}\otimes \left(\bfP_{(1, 2, 1)} \oplus \bfP_{(2,1,1)}\right) \xrightarrow{\partial} \bfP_{(2,2,1)}\big\downarrow^{\cH_5(0)}_{\cH_{1,4}(0)} \xrightarrow{\mu} \bfP_{(1)}\otimes \left(\bfP_{(3,1)} \oplus \bfP_{(2,2)}\right) \to 0.    
\]
On the level of basis elements, we see that under $\partial$: 
\[
        \ytableausetup{centertableaux, boxsize = 1em}
        \ytableaushort{\none1,23,4} \mapsto \ytableaushort{\none\none1,\none23,45}, \qquad
        \ytableaushort{\none1,24,3} \mapsto \ytableaushort{\none\none1,\none24,35}, 
\qquad    \ytableaushort{\none2,13,4} \mapsto \ytableaushort{\none\none2,\none13,45}, \qquad
        \ytableaushort{\none2,14,3} \mapsto \ytableaushort{\none\none2,\none14,35},
\]
\[
        \ytableaushort{\none3,14,2} \mapsto \ytableaushort{\none\none3,\none14,25}, \qquad 
        \ytableaushort{\none2,\none3,14} \mapsto \ytableaushort{\none\none2,\none35,14}, 
\qquad
        \ytableaushort{\none1,\none3,24} \mapsto \ytableaushort{\none\none1,\none35,24}, \qquad
        \ytableaushort{\none1,\none2,34} \mapsto \ytableaushort{\none\none1,\none25,34}.
\]    
Under $\mu$, the elements that map to nonzero elements are: 
\[
        \ytableaushort{\none\none1,\none34,25} \mapsto \ytableaushort{\none\none1,234}, \qquad
        \ytableaushort{\none\none2,\none34,15} \mapsto \ytableaushort{\none\none2,134}, 
\qquad     \ytableaushort{\none\none3,\none24,15} \mapsto \ytableaushort{\none\none3,124}, \qquad 
        \ytableaushort{\none\none2,\none15,34} \mapsto \ytableaushort{\none12,34},
\]
\[
        \ytableaushort{\none\none3,\none15,24} \mapsto \ytableaushort{\none13,24}, \qquad
        \ytableaushort{\none\none4,\none15,23} \mapsto \ytableaushort{\none14,23},
\qquad
        \ytableaushort{\none\none3,\none25,14} \mapsto \ytableaushort{\none23,14}, \qquad
        \ytableaushort{\none\none4,\none25,13} \mapsto \ytableaushort{\none24,13}.
\]
All other standard tableaux of shape $\diag(2, 2, 1)$ go to zero. 
\end{example}

\section*{Acknowledgements}
The authors thank Marcelo Aguiar,
Erin Delargy, Anastasia Nathanson, Vic Reiner, Ramanuja Charyulu Telekicherla Kanadalam, and Keller VandeBogert for helpful conversations. This project began at the 2023 Twin Cities REU in Combinatorics and Algebra, which was supported by NSF grant DMS-1745638.

\bibliographystyle{amsplain}
\bibliography{paperbib}

\end{document}